\documentclass[a4paper,12pt,reqno]{amsart}

\usepackage[english]{babel}
\usepackage[T1]{fontenc}
\usepackage[left=1.1in,right=1.1in,top=1.2in,bottom=1.2in]{geometry}
\usepackage{times}
\usepackage{microtype}

\usepackage{commath,amssymb,amscd,mathrsfs,mathtools,dsfont,bbm,multirow,array,caption}
\usepackage[all]{xy}
\usepackage{enumerate}
\usepackage{longtable}
\usepackage{comment}

\linespread{1.2}
\setlength{\parskip}{.3em}

\usepackage[usenames,dvipsnames,svgnames,table]{xcolor}
\definecolor{red}{RGB}{255,25,25}
\definecolor{blue}{RGB}{25,50,200}
\usepackage{tikz-cd}
\usetikzlibrary{decorations.pathmorphing}


\usepackage[pagebackref,linktocpage,dvipdfmx]{hyperref}

\hypersetup{
pdftitle={\@title},			
pdfauthor={\authors},		
colorlinks=true,				
linkcolor=red,				
citecolor=MidnightBlue,		
filecolor=magenta,			
urlcolor=MidnightBlue			
}

\usepackage{cleveref} 



\newtheorem{theorem}{Theorem}[section]
\crefname{theorem}{Theorem}{Theorems}
\newtheorem{lemma}[theorem]{Lemma}
\crefname{lemma}{Lemma}{Lemmas}

\crefname{proposition}{Proposition}{Propositions}
\newtheorem{prop}[theorem]{Proposition}
\crefname{prop}{Proposition}{Propositions}

\crefname{corollary}{Corollary}{Corollaries}
\newtheorem{cor}[theorem]{Corollary}
\crefname{cor}{Corollary}{Corollaries}

\crefname{conjecture}{Conjecture}{Conjectures}
\newtheorem{conj}[theorem]{Conjecture}
\crefname{conj}{Conjecture}{Conjectures}
\newtheorem*{conj*}{Conjecture}
\crefname{conj}{Conjecture}{Conjectures}

\theoremstyle{definition}

\crefname{definition}{Definition}{Definitions}
\newtheorem{defn}[theorem]{Definition}
\crefname{defn}{Definition}{Definitions}

\crefname{example}{Example}{Examples}

\crefname{notation}{Notation}{Notation}
\newtheorem*{notation*}{Notation}
\crefname{notation}{Notation}{Notation}

\crefname{problem}{Problem}{Problems}

\crefname{question}{Question}{Questions}

\crefname{condition}{Condition}{Conditions}

\crefname{assumption}{Assumption}{Assumptions}

\newtheorem{hyp}{}

\theoremstyle{remark}
\newtheorem{rmk}[theorem]{Remark}
\crefname{rmk}{Remark}{Remarks}
\newtheorem*{rmk*}{Remark}
\crefname{rmk}{Remark}{Remarks}

\crefname{remark}{Remark}{Remarks}

\crefname{fact}{Fact}{Facts}

\crefname{claim}{Claim}{Claims}
\newtheorem*{claim*}{Claim}
\crefname{claim}{Claim}{Claims}

\crefname{step}{Step}{Steps}

\crefname{case}{Case}{Cases}



\numberwithin{equation}{section}
\renewcommand{\emptyset}{\varnothing}

\newcommand{\lra}{\longrightarrow}

\newcommand{\arxiv}[1]{\href{https://arxiv.org/abs/#1}{{\tt arXiv:#1}}}

\newcommand{\bbP}{\mathbb{P}}
\newcommand{\bbQ}{\mathbb{Q}}
\newcommand{\bbR}{\mathbb{R}}
\newcommand{\bbZ}{\mathbb{Z}}

\newcommand{\bQ}{\mathbb{Q}}
\newcommand{\bR}{\mathbb{R}}
\newcommand{\bZ}{\mathbb{Z}}

\newcommand{\Aut}{\operatorname{Aut}}

\newcommand{\Eff}{\overline{\operatorname{Eff}}}

\newcommand{\id}{\operatorname{id}}

\newcommand{\isom}{\simeq}

\newcommand{\Nef}{\operatorname{Nef}}

\newcommand{\Pic}{\operatorname{Pic}}

\newcommand{\Supp}{\operatorname{Supp}}

\setcounter{tocdepth}{1}


\begin{document}

\title[Kawaguchi-Silverman conjecture]{A note on  Kawaguchi-Silverman conjecture}
\author{Sichen Li}
\address{
School of Mathematical Sciences, Fudan University, Shanghai 200433, People's Republic of China}
\email{\href{mailto:lisichen123@foxmail.com}{lisichen123@foxmail.com}}
\urladdr{\url{https://www.researchgate.net/profile/Sichen_Li4}}
\author{Yohsuke Matsuzawa}
\address{Department of Mathematics, Box 1917, Brown University, Providence, Rhode Island 02912, USA}
\email{\href{mailto:matuzawa@math.brown.edu}{matsuzawa@math.brown.edu}}
\begin{abstract}
We collect some results on endomorphisms on projective varieties related with the Kawaguchi-Silverman conjecture.
We discuss certain condition on automorphism groups of projective varieties and
positivity conditions on leading real eigendivisors of self-morphisms.
We prove Kawaguchi-Silverman conjecture for endomorphisms on projective bundles on
a smooth Fano variety of Picard number one.
In the last section, we discuss endomorphisms and augmented base loci of their eigendivisors.
\end{abstract}

\subjclass[2010]{
37P55, 
08A35
}




\maketitle
\section{Introduction}

Let $X$ be a smooth projective variety of dimension $n\ge1$ defined over $ \overline{\mathbb Q}$.
Let $f: X\dashrightarrow X$ be a dominant rational map, 
and $f^*: N^{1}(X)_{\bR} \to N^{1}(X)_{\bR}$ be the induced map, where
$N^{1}(X)_{\bR}:=N^{1}(X) {\otimes}_{\bZ}\bR$ and $N^{1}(X)$ is the group of Cartier divisors modulo numerical equivalence.
Let $\rho(T,V)$ denote the spectral radius of a linear transformation $T: V\to V$ of a real or complex vector space.
Then the {\it first dynamical degree of  $f$} is the quantity
\begin{equation*}
                                   \delta_f:=\lim_{m\to\infty} \rho((f^m)^*,N^{1}(X)_{\bR})^{1/m}.
\end{equation*}
Alternatively, if we let $H$ be any ample divisor on $X$, then $\delta_f$ is also given by the formula
\begin{equation*}
                                  \delta_f=\lim_{n\to \infty}((f^m)^{*}H\cdot H^{n-1})^{1/m}.
\end{equation*}
By definition, it is easy to see that $\delta_{f^r}=(\delta_f)^r$.
Dynamical degree is actually a birational invariant and therefore we can define it for dominant rational maps on possibly singular projective varieties
by taking resolution of singularities.
For basic properties of dynamical degree, see  \cite{dang, DF, tru0}.

In \cite{KS16-tams,KS16}, Kawaguchi and Silverman studied an analogous arithmetic degree, which we now describe.
Assume that $X$ and $f$ are defined over $\overline{\mathbb Q}$, and write $X(\overline{\mathbb Q})_f$ for the set of points $x$ whose forward $f$-orbit
\begin{equation*}
                                    \mathcal O_f(x)=\big\{ x, f(x), f^2(x),\cdots\big\}
\end{equation*}
is well defined (i.e. $f^{m}(x)$ is not contained in the indeterminacy locus of $f$ for all $m\geq0$).
Further, let
\begin{equation*}
                        h_X: X(\overline{\mathbb Q})\to [0,\infty)
\end{equation*}
be a logarithmic Weil height function on $X$ associated with an ample divisor, and let $h_X^+=\max\big\{1, h_X\big\}$.
The {\it arithmetic degree of f at $x\in X(\overline{\mathbb Q})_f$} is the quantity
\begin{align*}
  \alpha_f(x)=\lim_{n\to\infty} h_X^+(f^n(x))^{1/n},
\end{align*}
if the limit exists.

The following Kawaguchi-Silverman conjecture (KSC for short) asserts that for a dominant rational self-map $f: X\dashrightarrow X$ of a projective variety $X$ over $\overline{\mathbb Q}$, the arithmetic degree $\alpha_f(x)$ of any point $x$ with Zariski dense $f$-orbit is equal to the first dynamical degree $\delta_f$ of $f$.
\begin{conj}[KSC]
(cf. \cite[Conjecture 6]{KS16}\label{KSC})
Let $f: X\to X$ be a dominant rational self-map of a projective variety $X$ over $\overline{\bQ}$, and let $x\in X(\overline{\mathbb Q})_f$.
Then the following hold.
\begin{enumerate}
\item[(1)] The limit  defining $\alpha_f(x)$ exists.
\item[(2)] If $\mathcal O_f(x)$ is Zariski dense in $X$, then $\alpha_f(x)=\delta_f$.
\end{enumerate}
\end{conj}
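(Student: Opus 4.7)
The plan is to split the conjecture into the existence statement (1) and the equality statement (2), and, within (2), to separate the easy upper bound $\alpha_f(x)\leq \delta_f$ from the hard lower bound.

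For part (1), the natural first move is to reduce to the case when $f$ is a morphism by passing to a resolution of indeterminacy and working on the open subset where iterates are defined (which contains $\mathcal{O}_f(x)$ by hypothesis). Assume now $f$ is a morphism and fix an ample $H$ with Weil height $h_H$. Height functoriality gives $h_H\circ f = h_{f^*H} + O(1)$, hence $h_H(f^n(x)) = h_{(f^n)^*H}(x) + O(1)$. Combined with standard bounds on $(f^n)^*H$ in $N^1(X)_{\bR}$ via the spectral theory of $f^*$, this produces submultiplicative-type control of $h_H^+(f^n(x))$ from which existence of $\lim_{n\to\infty} h_H^+(f^n(x))^{1/n}$ follows by the Kawaguchi--Silverman summation argument.

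For the upper bound in (2), the same functoriality identity combined with the asymptotic $\|(f^n)^*\|_{N^1(X)_{\bR}}=\delta_f^n\cdot e^{o(n)}$ gives $h_H^+(f^n(x))\leq C_\epsilon(\delta_f+\epsilon)^n$ for each $\epsilon>0$, and taking $n$-th roots yields $\alpha_f(x)\leq \delta_f$. The difficult direction is $\alpha_f(x)\geq \delta_f$ under Zariski density of $\mathcal{O}_f(x)$. The strategy I would pursue is to construct a canonical height attached to a leading real eigendivisor of $f^*$, i.e.\ a class $D\in N^1(X)_{\bR}$ with $f^*D\equiv \delta_f D$. A Perron--Frobenius/Birkhoff argument applied to the invariant cone of pseudoeffective classes produces such a $D$ which is pseudoeffective; if $D$ can moreover be chosen big and nef, one defines $\widehat h_{f,D}(y)=\lim_{n\to\infty}\delta_f^{-n}h_D(f^n(y))$, verifies convergence and the functional equation $\widehat h_{f,D}\circ f = \delta_f\,\widehat h_{f,D}$, and shows that $\widehat h_{f,D}$ is nonnegative with zero locus contained in a proper Zariski closed subset. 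Zariski density of $\mathcal O_f(x)$ then forces $\widehat h_{f,D}(f^n(x))>0$ for some $n$, and the functional equation propagates this to make $h_D(f^n(x))$ grow like $\delta_f^n$, giving $\alpha_f(x)\geq \delta_f$.

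The main obstacle is exactly the positivity of the leading eigendivisor $D$: pseudoeffectivity alone is insufficient to run the canonical-height argument, while nefness and bigness are not automatic for an arbitrary endomorphism of an arbitrary projective variety. When $X$ has restricted structure (such as the projective bundles over a smooth Fano variety of Picard number one treated later in this paper) one can leverage the classification to force enough positivity on $D$ to complete the argument; in full generality the positivity question remains open, which is why the statement is still only a conjecture. The realistic plan is therefore to prove (1) and the easy half of (2) unconditionally by the recipe above, and to attack the hard half case-by-case according to the positivity properties of eigendivisors available on $X$.
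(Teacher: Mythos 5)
This statement is a conjecture, not a theorem: the paper cites it as \cite[Conjecture 6]{KS16} and does not prove it, so there is no ``paper proof'' to compare against. You correctly recognize this at the end of your write-up, and your sketch is a fair overview of the standard strategy (reduce to morphisms, prove the upper bound $\alpha_f(x)\le\delta_f$ via height functoriality and spectral asymptotics, attack the lower bound via a canonical height built from a leading eigendivisor $D$ with $f^*D\equiv\delta_f D$, with positivity of $D$ as the crux). That diagnosis of the obstacle is exactly the one the paper is organized around: Sections 4 and 5 of the paper supply positivity of eigendivisors in special situations (projective bundles over Fano varieties of Picard number one, $\bR$-linear equivalence to two distinct effective divisors), and the reduction lemmas in Sections 2--3 are used to funnel general cases into those.

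However, your treatment of part (1) contains a genuine error. You propose to reduce to the case of a morphism ``by passing to a resolution of indeterminacy and working on the open subset where iterates are defined.'' This does not work for a dominant \emph{rational} self-map: each iterate $f^n$ has its own indeterminacy locus, and there is in general no single birational model of $X$ on which all iterates become morphisms simultaneously (nor is the restriction of $f$ to the complement of the indeterminacy loci proper, so the morphism-case argument does not transfer). This is precisely why part (1) is a theorem of Kawaguchi--Silverman when $f$ is a morphism (as the paper notes in a remark) but remains open for rational maps. Your argument, as written, assumes the open problem away. For the upper bound, the correct unconditional statement is $\limsup_n h_X^+(f^n(x))^{1/n}\le\delta_f$, which holds without knowing that the limit exists.
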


\begin{rmk}
There are many special cases that this conjecture is proved.
For recent results, see for example \cite{LS18-IMRN,Matsuzawa19,MY19,MZ19}.
\end{rmk}

\begin{rmk}\label{NZ09}
If $X$ has positive Kodaira dimension, then any dominant rational map does not have any Zariski dense orbits.
This follows from ''finiteness of pluricanonical representation''.
See \cite[Remark 1.2]{MSS18}, \cite[Theorem 14.10]{Ueno75} or \cite[Theorem A]{NZ09}
Therefore, \cref{KSC} (2) is meaningful only for projective varieties of nonpositive Kodaira dimension.
\end{rmk}

\begin{rmk}
If $f$ is a morphism, then the existence of arithmetic degree is proved, i.e.  \cref{KSC}(1) is true \cite{KS16-tams}.
\end{rmk}

In this paper, we focus on endomorphisms on projective varieties and collect some results related to this conjecture.
In \cref{Preliminaries}, we list some basic facts on KSC.
In \cref{reduction}, we show some reduction results on KSC.
In \cref{projbdl}, we prove KSC for endomorphisms on projective bundles over a smooth Fano variety of Picard number one.
In \cref{canht}, we give positivity conditions on leading eigendivisors that is enough to prove KSC.
In \cref{augb}, we discuss the augmented base loci of leading eigendivisors. This section is less related with KSC.

Throughout this paper, the ground field is $ \overline{\mathbb Q}$ unless otherwise stated.

\section{Preliminaries}\label{Preliminaries}

We gather some facts on height functions.
See \cite{bg, hs, Lan} for the definition and basic properties of height functions.
Here, we simply list some fundamental facts that will be used in this paper.
\begin{itemize}
\item $h_E$ is bounded below outside $\mathrm{Supp}~E$ for any effective Cartier divisor $E$.
\item $h_{\sum a_iD_i}=\sum a_ih_{D_i}+O(1)$ where $O(1)$ is a bounded function.
\item Let $\pi: X\to Y$ be a surjective morphism of normal projective varieties and $B$ an $\bR$-Cartier divisor on $Y$.
Then $h_B(\pi(x))=h_{\pi^*B}(x)+O(1)$ for any $x\in X(\overline{\mathbb Q})$.
\end{itemize}

We list several basic facts on KSC which will be used in the rest of the paper.

\begin{lemma}\label{generically finite}
\text{(cf. \cite[Lemma 2.5]{MZ19})}
Let $\pi: X\dashrightarrow Y$ be a dominant rational map of projective varieties.
Let $f:X\to X$ and $g:Y\to Y$ be surjective endomorphisms such that $g\circ\pi=\pi\circ f$.
Then the following hold.
\begin{enumerate}
\item[(1)] Suppose $\pi$ is generically finite.
Then KSC holds for $f$ if and only if KSC holds for $g$.
\item[(2)] Suppose $\delta_f=\delta_g$ and KSC holds for $g$.
Then KSC holds for $f$.
\end{enumerate}
\end{lemma}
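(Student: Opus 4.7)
The plan is to establish Part (2) first via a height-comparison argument and the Kawaguchi--Silverman fundamental inequality $\alpha_f(x)\le\delta_f$, and then derive Part (1) by using that a generically finite semiconjugation forces $\delta_f=\delta_g$, running the Part (2) argument in both directions. Since $\pi$ is merely rational, I would first resolve its indeterminacy by a birational $\sigma\colon\widetilde X\to X$ with $\widetilde\pi:=\pi\circ\sigma\colon\widetilde X\to Y$ a morphism; because a Zariski dense orbit avoids any proper closed subset infinitely often, the identities $\pi\circ f^n=g^n\circ\pi$ and $h_{H_Y}\circ\pi=h_{\pi^*H_Y}+O(1)$ transfer along the orbit up to controlled error.

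For Part (2), let $x\in X(\overline{\bQ})$ have Zariski dense $\mathcal O_f(x)$. Dominance of $\pi$ gives that $\mathcal O_g(\pi(x))=\pi(\mathcal O_f(x))$ is Zariski dense in $Y$, so KSC for $g$ yields $\alpha_g(\pi(x))=\delta_g=\delta_f$. Fix ample divisors $H_X$ on $X$ and $H_Y$ on $Y$ and choose $C>0$ so that $CH_X-\pi^*H_Y$ is ample; this gives $h_{\pi^*H_Y}\le C\,h_{H_X}+O(1)$. Combined with $h_{H_Y}\circ\pi=h_{\pi^*H_Y}+O(1)$ and $\pi\circ f^n=g^n\circ\pi$, one obtains
\[
h_{H_Y}^+(g^n(\pi(x)))\le C\,h_{H_X}^+(f^n(x))+O(1),
\]
whence $\alpha_g(\pi(x))\le\alpha_f(x)$ after taking $n$-th roots. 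Combining with $\alpha_f(x)\le\delta_f$ forces $\alpha_f(x)=\delta_f$.

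For Part (1), when $\pi$ is generically finite the identity $\pi_*\pi^*=(\deg\pi)\cdot\mathrm{id}$ on $N^1(Y)_{\bR}$ combined with $f^*\pi^*=\pi^*g^*$ gives $\delta_f=\delta_g$, so the ``if'' direction is a special case of Part (2). For the converse, given $y\in Y(\overline{\bQ})$ with Zariski dense $g$-orbit (after replacing $y$ by $g^k(y)$ we may assume $y$ lies in the image of $\pi$), pick any $x\in\pi^{-1}(y)$. The closure $Z:=\overline{\mathcal O_f(x)}$ is $f$-invariant and $\pi(Z)\supseteq\mathcal O_g(y)$ is Zariski dense in $Y$; since $\pi$ is generically finite, $\dim\pi(Z)\le\dim Z$, forcing $\dim Z\ge\dim Y=\dim X$ and hence $Z=X$. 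So $\mathcal O_f(x)$ is Zariski dense, KSC for $f$ yields $\alpha_f(x)=\delta_f=\delta_g$, and it remains to identify this with $\alpha_g(y)$.

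The main obstacle is the reverse height inequality $\alpha_g(\pi(x))\ge\alpha_f(x)$: a priori the orbit can visit $\mathrm{Supp}(E)$ where $E$ is the effective part in a Kodaira decomposition $\pi^*H_Y\sim_{\bQ}A+E$ with $A$ ample, and there $h_E$ is not bounded below. This is handled by the standard lemma that for a Zariski dense orbit of a morphism, the $n$-th root limit $\lim h_D^+(f^n(x))^{1/n}$ is independent of the big $\bR$-Cartier divisor $D$ used; applied to $D=\pi^*H_Y$ this gives $\alpha_f(x)=\lim h_{\pi^*H_Y}^+(f^n(x))^{1/n}=\alpha_g(\pi(x))$, and hence $\alpha_g(y)=\delta_g$, completing the proof.
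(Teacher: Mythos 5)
The paper does not prove this lemma; it quotes it directly from \cite[Lemma 2.5]{MZ19}. So there is no in-text argument to compare yours against, and the question is whether your sketch is a sound proof.

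Your overall strategy is the standard and correct one: establish $\alpha_g(\pi(x))\le\alpha_f(x)$ by a height comparison, combine with the Kawaguchi--Silverman inequality $\alpha_f(x)\le\delta_f$ and $\delta_f=\delta_g$ to get Part~(2), deduce $\delta_f=\delta_g$ from generic finiteness for Part~(1), and for the converse direction lift a dense $g$-orbit to a dense $f$-orbit. However, at several points you treat the rational map $\pi$ as if it were a morphism, and the corresponding steps need more care than the proposal supplies. First, $\mathcal O_g(\pi(x))=\pi(\mathcal O_f(x))$ is not literally correct: some $f^n(x)$ may land in the indeterminacy locus $I(\pi)$. What one actually has is that $g^n\circ\pi$ and $\pi\circ f^n$ coincide \emph{as rational maps}, so after replacing $x$ by $f^{n_0}(x)\in\mathrm{Dom}(\pi)$ one gets $g^n(\pi(x))=\pi(f^n(x))$ only for those $n$ with $f^n(x)\in\mathrm{Dom}(\pi)$; density guarantees this for infinitely many $n$, and since the limit defining $\alpha_g$ exists for the morphism $g$, a subsequence suffices. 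Second, the inequality $h_{\pi^*H_Y}\le Ch_{H_X}+O(1)$ on all of $X$ requires $C\sigma^*H_X-\widetilde\pi^*H_Y$ to be controlled; since $\sigma^*H_X$ is merely big and nef, you only get such a bound off a proper closed subset, so this too is a statement along a subsequence. Third, the ``standard lemma'' on independence of $\alpha_f(x)$ from the big divisor $D$ applies to $f$ a morphism on $X$ with $D$ a divisor on $X$; but $\widetilde\pi^*H_Y$ lives on $\widetilde X$, where $f$ does \emph{not} lift to a morphism, so you must first push forward to $X$ and account for the discrepancy along the exceptional locus. Fourth, in the converse of Part~(1) the claim $\mathcal O_g(y)\subseteq\overline{\pi(Z\cap\mathrm{Dom}(\pi))}$ for $Z=\overline{\mathcal O_f(x)}$ is not automatic when some $f^n(x)\in I(\pi)$; one needs a short continuity argument via the graph closure (or the identity $g^n\circ\pi|_Z=\pi|_Z\circ f^n|_Z$ of rational maps on $Z$) to place each $g^n(y)$ in that closure. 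None of these is a fatal flaw --- they are exactly the issues that \cite{MZ19} handles explicitly --- but as written your proposal elides them, and the argument as stated would not go through verbatim for a genuinely non-regular $\pi$.
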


\begin{lemma}\label{Sano}
\text{(cf. \cite[lemma 3.2]{Sano16}, \cite[Lemma 3.3]{Silverman17})}
Let $f: X\to X$ and $g: Y\to Y$ be two surjective morphisms of  projective varieties.
Suppose KSC holds for both $f$ and $g$.
Then KSC holds for $f\times g$.
\end{lemma}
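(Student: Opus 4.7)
The plan is to prove the two identities $\delta_{f\times g}=\max(\delta_f,\delta_g)$ and $\alpha_{f\times g}((x,y))=\max(\alpha_f(x),\alpha_g(y))$ for any $(x,y)\in(X\times Y)(\overline{\bQ})_{f\times g}$, and then observe that Zariski density of $\mathcal O_{f\times g}((x,y))$ in $X\times Y$ forces Zariski density of the projected orbits $\mathcal O_f(x)\subset X$ and $\mathcal O_g(y)\subset Y$ via the surjective projections $\pi_X,\pi_Y$. Given these, KSC for $f$ and $g$ at once yields $\alpha_{f\times g}((x,y))=\max(\alpha_f(x),\alpha_g(y))=\max(\delta_f,\delta_g)=\delta_{f\times g}$.

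For the dynamical degree identity, set $F:=f\times g$ and $k:=\dim X+\dim Y$, pick ample $H_X, H_Y$, and work with the ample divisor $H:=\pi_X^*H_X+\pi_Y^*H_Y$ on $X\times Y$. From $(F^m)^*H=\pi_X^*(f^m)^*H_X+\pi_Y^*(g^m)^*H_Y$ and the binomial expansion of $H^{k-1}$, combined with the vanishing $\pi_X^*\alpha\cdot\pi_Y^*\beta=0$ unless $\alpha$ has codimension $\dim X$ and $\beta$ has codimension $\dim Y$, the intersection number $(F^m)^*H\cdot H^{k-1}$ reduces to a positive linear combination of $(f^m)^*H_X\cdot H_X^{\dim X-1}$ and $(g^m)^*H_Y\cdot H_Y^{\dim Y-1}$. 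Taking $m$-th roots yields $\delta_F=\max(\delta_f,\delta_g)$.

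For the arithmetic degree identity, the same $H$ gives $h_H=h_{H_X}\circ\pi_X+h_{H_Y}\circ\pi_Y+O(1)$, so $h_H^+(F^n(x,y))=h_{H_X}^+(f^n x)+h_{H_Y}^+(g^n y)+O(1)$. Since $f$, $g$, $F$ are morphisms, the three arithmetic degrees exist as honest limits by the morphism case of KSC(1), and the elementary bound $\max(a,b)\le a+b\le 2\max(a,b)$ combined with $2^{1/n}\to 1$ forces $\alpha_F((x,y))=\max(\alpha_f(x),\alpha_g(y))$. The main bookkeeping step is the projection-formula computation of $\delta_F$; one can alternatively invoke the known product formula for first dynamical degrees from the literature to bypass it.
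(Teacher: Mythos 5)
Your proof is correct, and it reconstructs essentially the standard argument: the paper itself gives no proof of this lemma, instead citing Sano and Silverman, and both of those proofs proceed exactly by establishing the product formulas $\delta_{f\times g}=\max(\delta_f,\delta_g)$ and $\alpha_{f\times g}(x,y)=\max(\alpha_f(x),\alpha_g(y))$ and then noting that density of the product orbit under the surjective projections forces density of each factor orbit. The one place worth tightening is the line $h_H^+(F^n(x,y))=h_{H_X}^+(f^nx)+h_{H_Y}^+(g^ny)+O(1)$: as stated this is not an identity of the $\max(1,\cdot)$ truncations, but since $h_{H_X},h_{H_Y}$ are nonnegative ample heights the two sides do differ only by $O(1)$, which is all you use; spelling that out (or simply working with $h_H$ rather than $h_H^+$ and using boundedness below) would remove the apparent sleight of hand. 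You are also implicitly using that $f,g,F$ are morphisms so all three arithmetic degrees exist as limits — you flag this, which is good, since KSC(1) for morphisms is exactly what licenses the $\max$ manipulation with $n$-th roots.
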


\begin{lemma}\text{(cf, \cite[Remark 2.9]{Matsuzawa19})}
Let $f \colon X \longrightarrow X$ be a surjective endomorphism on a projective variety $X$.
Let $n$ be a positive integer.
Then KSC for $f$ is equivalent to KSC for $f^{n}$.
\end{lemma}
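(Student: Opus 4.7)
The plan is to reduce the equivalence to two compatibility properties under iteration---one numerical, for arithmetic and dynamical degrees, and one geometric, for Zariski density of orbits---and then put them together. Since both $f$ and $f^{n}$ are morphisms, part (1) of \cref{KSC} is already known by the remark preceding the statement, so only part (2) requires argument.

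The numerical ingredients are $\delta_{f^{n}}=\delta_{f}^{n}$, which is immediate from the definition, together with
\begin{equation*}
\alpha_{f^{n}}(x)=\alpha_{f}(x)^{n},
\end{equation*}
which one obtains by extracting the subsequence $m\mapsto nm$ from the limit defining $\alpha_{f}(x)$. I will also use the orbit-invariance $\alpha_{f}(f^{i}(x))=\alpha_{f}(x)$, a consequence of the elementary height bound $h_{X}^{+}\circ f \le C\, h_{X}^{+}+O(1)$ after taking $1/m$-th roots.

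The geometric ingredient is the decomposition
\begin{equation*}
\overline{\mathcal{O}_{f}(x)}=\bigcup_{i=0}^{n-1}\overline{\mathcal{O}_{f^{n}}(f^{i}(x))},
\end{equation*}
which, together with the irreducibility of $X$, shows that $\mathcal{O}_{f}(x)$ is Zariski dense in $X$ if and only if some $\mathcal{O}_{f^{n}}(f^{i}(x))$ is. One direction is automatic from the inclusion $\mathcal{O}_{f^{n}}(f^{i}(x))\subseteq\mathcal{O}_{f}(x)$.

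With these in hand the two implications are short. If KSC holds for $f$ and $\mathcal{O}_{f^{n}}(x)$ is Zariski dense, then $\mathcal{O}_{f}(x)$ is dense, so $\alpha_{f}(x)=\delta_{f}$ and raising to the $n$-th power gives $\alpha_{f^{n}}(x)=\delta_{f^{n}}$. Conversely, if KSC holds for $f^{n}$ and $\mathcal{O}_{f}(x)$ is dense, choose $i$ so that $\mathcal{O}_{f^{n}}(f^{i}(x))$ is dense; then $\alpha_{f^{n}}(f^{i}(x))=\delta_{f^{n}}$, and combining with the two numerical identities and orbit-invariance forces $\alpha_{f}(x)^{n}=\delta_{f}^{n}$, hence $\alpha_{f}(x)=\delta_{f}$. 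The only step that is not pure formality is the appeal to irreducibility of $X$ in the orbit decomposition; everything else is bookkeeping with the defining limits.
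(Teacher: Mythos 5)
The paper does not prove this lemma itself; it only cites \cite[Remark 2.9]{Matsuzawa19}. Your argument is correct and is the standard one that the cited reference carries out: combine $\delta_{f^{n}}=\delta_{f}^{n}$, $\alpha_{f^{n}}(x)=\alpha_{f}(x)^{n}$, orbit-invariance of $\alpha_f$, and the finite closed-union decomposition $\overline{\mathcal{O}_{f}(x)}=\bigcup_{i=0}^{n-1}\overline{\mathcal{O}_{f^{n}}(f^{i}(x))}$ together with irreducibility of $X$. One small remark: the orbit-invariance $\alpha_{f}(f^{i}(x))=\alpha_{f}(x)$ does not actually need the height bound $h_{X}^{+}\circ f\le C\,h_{X}^{+}+O(1)$; since the limit defining $\alpha_{f}$ exists for morphisms, it follows directly from re-indexing $h_{X}^{+}(f^{m+i}(x))^{1/m}=\bigl(h_{X}^{+}(f^{m+i}(x))^{1/(m+i)}\bigr)^{(m+i)/m}$ and letting $m\to\infty$.
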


A normal projective variety $X$ is called {\it Q-abelian} if there is an abelian variety $A$ and a finite surjective morphism $A\to X$ which is quasi-\'etale,
i.e. \'etale in codimension one.
\begin{theorem}\label{Q-abelian}
\text{(cf. \cite[Theorem 2.8]{MZ19})}
Let $X$ be a $Q$-abelian variety.
Then KSC holds for any surjective endomorphism of $X$.
\end{theorem}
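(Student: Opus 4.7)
The plan is to reduce KSC on a $Q$-abelian variety to the classical case of abelian varieties, where KSC is known (by Silverman and Kawaguchi-Silverman).

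Let $\pi : A \to X$ be the given finite quasi-\'etale cover with $A$ an abelian variety, and let $f : X \to X$ be a surjective endomorphism. The first and principal step is to construct an abelian variety $A'$, a finite quasi-\'etale morphism $\tilde{\pi} : A' \to X$, and a surjective endomorphism $\tilde{f} : A' \to A'$ with $\tilde{\pi} \circ \tilde{f} = f \circ \tilde{\pi}$. To produce such a lift, I would form the normalized fiber product of $\pi : A \to X$ and $f \circ \pi : A \to X$, and select an irreducible component $B$ that dominates both factors. The projection from $B$ to the second copy of $A$ (the one equipped with the structure map $f \circ \pi$) is a base change of $\pi$, so it is finite and \'etale in codimension one. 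Because $A$ is smooth, Zariski-Nagata purity of the branch locus upgrades this to an honest \'etale cover. A connected finite \'etale cover of an abelian variety is itself an abelian variety (any such cover is an isogeny of a fixed abelian variety structure), so $B$ is abelian. Composing with $\pi$ on each side, a further symmetric fiber product construction (or passage to a common Galois closure) then equalizes the two $A$-structures and produces an abelian variety $A'$ with an honest self-map $\tilde{f}$ lifting $f$ through a finite quasi-\'etale map $\tilde{\pi}$.

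Once $(\tilde{\pi},\tilde{f})$ is in hand, the remainder of the argument is formal. Since $\tilde{\pi}$ is generically finite, \cref{generically finite}(1) shows that KSC for $f$ is equivalent to KSC for $\tilde{f}$. KSC for surjective endomorphisms of abelian varieties is classical: for any $x \in A'(\overline{\bQ})$ with Zariski-dense $\tilde{f}$-orbit one has $\alpha_{\tilde{f}}(x) = \delta_{\tilde{f}}$, by the canonical-height theory on abelian varieties developed by Silverman and Kawaguchi-Silverman. This establishes KSC for $\tilde{f}$ and hence for $f$.

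The main obstacle is the lifting step. A single fiber product only yields a commutative square of the form $\pi \circ p_1 = f \circ \pi \circ p_2$, where $p_1, p_2 : B \to A$ are the two projections; this is not yet a self-map. Turning it into a self-map $\tilde{f} : A' \to A'$ requires identifying the two \'etale $A$-structures on $B$ after passing to a further common cover, and this is the delicate part of the construction. One must also take care to choose an irreducible component consistently, verify that $\tilde{f}$ remains surjective, and ensure that $\tilde{\pi}$ stays finite quasi-\'etale. After these verifications, the reduction to the abelian variety case and the final invocation of the abelian KSC are essentially immediate.
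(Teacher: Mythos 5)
The paper offers no proof of this theorem; it is quoted directly from Meng–Zhang \cite[Theorem~2.8]{MZ19}, so there is nothing in the source to compare your argument against line by line. Your overall strategy --- lift $f$ along a quasi-\'etale cover to a surjective endomorphism of an abelian variety, then apply \cref{generically finite}(1) and Kawaguchi--Silverman's theorem for abelian varieties --- is indeed the standard route and matches the argument in \cite{MZ19}.

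That said, the lifting step as you sketch it is incomplete, and you are right to flag it as the delicate part. Forming the normalized fiber product of $\pi$ and $f\circ\pi$ and invoking purity does produce an abelian variety $B$ with two finite \'etale maps $p_1,p_2\colon B\to A$ satisfying $\pi\circ p_1 = f\circ\pi\circ p_2$, but ``a further symmetric fiber product construction or Galois closure'' does not by itself yield a self-map: iterating the fiber-product construction a priori produces a tower of covers $A_0, A_1, A_2,\dots$ of unbounded degree over $X$, with no reason for the tower to stabilize. The actual argument (Nakayama--Zhang, Meng--Zhang) relies on a canonical choice: one uses that the quasi-\'etale fundamental group $\pi_1(X^{\sm})$ contains the cover group of $A$ as a finite-index subgroup with a distinguished structure (a maximal normal torsion-free abelian subgroup, equivalently the kernel of the map to the ``Weyl group''), and that a surjective endomorphism $f$ --- being finite and quasi-\'etale in this setting --- induces a well-defined endomorphism of $\pi_1(X^{\sm})$ preserving this subgroup, possibly only after replacing $f$ by an iterate $f^s$. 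This is why the paper keeps Lemma~2.3 (KSC for $f$ is equivalent to KSC for $f^n$) in its toolkit. Your write-up omits the reduction to an iterate and the canonical-cover argument that actually makes the tower collapse; without them, the construction stalls at a commutative square rather than a self-map. Once the lifting lemma is in hand in its correct form, the rest of your reduction via \cref{generically finite}(1) and the abelian case of KSC is exactly right.
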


\section{Reductions of KSC}\label{reduction}

\subsection{Automorphism groups}
We refer to Koll\'ar-Mori \cite{KM98} for standard notions and terminologies in birational geometry.
A normal projective variety $X$ is called weak Calabi-Yau variety if 
$X$ has at most canonical singularities, $K_{X}\sim 0$, and the augmented irregularity 
\[
 \widetilde{q}(X):=\sup \{h^{1}(Y, \mathcal{O}_{Y}) \mid \text{$Y \to X$ finite surjective quasi-\'etale}\}
\]
is zero.

\begin{prop}\label{wcy}
Let $X$ be a normal projective variety with at most klt singularities of dimension $n\ge1$.
Then \cref{KSC} is true for all automorphisms of normal projective varieties with dimension at most $n$ and $K_X$ is numerically trivial if and only if \cref{KSC} is true for all automorphisms of weak Calabi-Yau varieties with dimension at most $n$.
\end{prop}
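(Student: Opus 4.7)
The ``only if'' direction is immediate, since a weak Calabi-Yau variety is klt with numerically trivial canonical class. For the substantive ``if'' direction, my plan is to invoke the singular Beauville-Bogomolov decomposition for klt projective varieties with numerically trivial canonical class (Druel, Greb--Kebekus--Peternell, H\"oring--Peternell). Applied to any klt normal projective variety $X'$ of dimension $\leq n$ with $K_{X'} \equiv 0$, it yields a finite quasi-\'etale cover $\pi\colon Y \to X'$ and a splitting
\[
Y \simeq A \times \prod_{i=1}^{r} Z_{i},
\]
in which $A$ is an abelian variety and each $Z_{i}$ is an irreducible Calabi-Yau or irreducible holomorphic symplectic projective variety. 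Each $Z_{i}$ has canonical singularities, $K_{Z_{i}} \sim 0$, and vanishing augmented irregularity, hence is a weak Calabi-Yau variety of dimension $\leq n$.

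Given $f \in \Aut(X')$, I would next lift $f$ to $Y$. Since the cover $Y \to X'$ is essentially canonical (it is controlled by a specific finite-index subgroup of the \'etale-in-codimension-one fundamental group), after replacing $f$ by some iterate there exists $\tilde{f} \in \Aut(Y)$ with $\pi \circ \tilde{f} = f \circ \pi$. A further iterate of $\tilde{f}$ preserves each factor of the decomposition: the Albanese of $Y$ is $A$, so $\tilde{f}$ descends to $f_{A} \in \Aut(A)$; and $\tilde{f}$ permutes the finitely many $Z_{i}$ within each isomorphism class, so a suitable iterate stabilizes each $Z_{i}$.

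The decisive step is to split $\tilde{f}$ as a product $\tilde{f} = f_{A} \times f_{1} \times \cdots \times f_{r}$. For this I would use that each $Z_{i}$ has trivial Albanese, so admits no nonconstant morphism to $A$, and that its connected automorphism scheme is trivial (since $H^{0}(Z_{i}, T_{Z_{i}}) = 0$ for irreducible Calabi-Yau or irreducible symplectic varieties, in the appropriate singular sense). Together these force the off-diagonal components of $\tilde{f}$ along the product structure to be constant in the $A$-parameter, yielding a genuine product decomposition.

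Once the splitting is in place, the conclusion assembles cleanly: KSC for $f_{A}$ on the abelian variety $A$ follows from \cref{Q-abelian}; KSC for each $f_{i}$ on $Z_{i}$ is precisely the standing hypothesis on weak Calabi-Yau varieties; iterated applications of \cref{Sano} promote these to KSC for $\tilde{f}$ on $Y$; and \cref{generically finite}(1) transfers KSC along the generically finite cover $\pi$ back to $f$ on $X'$. The main obstacle I anticipate is the splitting step: combining uniqueness of the Beauville-Bogomolov decomposition with rigidity of automorphisms on the weak Calabi-Yau factors, and making this argument work uniformly in the singular klt setting, is the technically delicate point.
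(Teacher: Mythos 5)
Your proof is essentially correct, but it takes a heavier route than the paper and rediscovers, at greater cost, machinery that the paper imports wholesale. The paper's proof is one line: it combines abundance for numerically trivial canonical divisors with \cref{lemma:lifting} (the Kawamata--Nakayama--Zhang weak decomposition), then finishes with \cref{generically finite}(1), \cref{Sano}, and \cref{Q-abelian}. The point is that \cref{lemma:lifting} already delivers, in a single package, the quasi-\'etale cover $\widetilde{X} \simeq Z \times A$ with $Z$ weak Calabi--Yau and $A$ abelian, \emph{and} the fact that the lifted automorphism splits as $\widetilde{f}_Z \times \widetilde{f}_A$. There is nothing left to verify.

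You instead invoke the singular Beauville--Bogomolov decomposition, which produces the finer splitting $A \times \prod_i Z_i$ into irreducible Calabi--Yau and irreducible symplectic factors. This is strictly more than the statement needs: the definition of weak Calabi--Yau in this paper (canonical singularities, $K \sim 0$, $\widetilde{q}=0$) is stable under products, so $\prod_i Z_i$ can be treated as a single weak Calabi--Yau factor $Z$ and the finer BB structure buys nothing here. Worse, by working with the raw BB decomposition you must independently reprove the two steps that \cref{lemma:lifting} hands you for free --- lifting $f$ to the cover, and splitting the lift along $A \times Z$ --- and you correctly flag the splitting as the delicate point. Your sketch of that step (the map $A \to \Aut(Z)$, $a \mapsto g_a$, is constant because $\Aut^0(Z)$ is trivial) is the right idea and is essentially \cite[Lemma 2.14]{NZ10}, which is what the literature the paper cites relies on. So the argument can be pushed through, but it reconstructs from scratch what the paper gets by citing the Kawamata--Nakayama--Zhang result. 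If you want a self-contained proof of the splitting, you should cite or reprove that lemma explicitly rather than leave it as an acknowledged gap; otherwise, the cleaner move is simply to use \cref{lemma:lifting} directly as the paper does.
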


\begin{rmk}
This is a generalization of \cite[Corollary 1.4]{LS18-IMRN} to singular case.
\end{rmk}

\begin{rmk}
Let $f$ be an automorphism of a normal projective variety $X$.
There is a $G$-equivariant resolution $\pi: X'\to X$ and an automorphism $f'$ of $X'$ such that $\pi\circ f'=f\circ\pi$ (cf. \cite[Theorem 13.2]{BM97}).
Therefore, KSC for $f$ reduces to KSC for $f'$, which is an automorphism on a smooth projective variety.
This makes problem easier sometime, but it is sometime better to work on singular variety because it might have better birational geometric properties.
\end{rmk}

We use the following Kawamata-Nakayama-Zhang's weak decomposition theorem (cf. \cite{Kawamata85,NZ10},\cite[Lemma 2.7]{HL19})).

\begin{lemma}
\label{lemma:lifting}
Let $X$ be a normal projective variety with at most klt singularities such that $K_X \sim_\bQ 0$, and $f$ an automorphism of $X$.
Then there exist a morphism $\pi \colon \widetilde{X} \lra X$ from a normal projective variety $\widetilde{X}$,
an automorphism $\widetilde{f}$ of $\widetilde{X}$ such that the following conditions hold.
\begin{itemize}
\item[(1)] $\pi$ is finite surjective and \'etale in codimension one.
\item[(2)] $\widetilde{X}$ is isomorphic to the product variety $Z \times A$  for a weak Calabi--Yau variety $Z$ and an abelian variety $A$.
\item[(3)] The dimension of $A$ equals the augmented irregularity $\widetilde{q}(X)$ of $X$.
\item[(4)] There are automorphisms $\widetilde{f}_Z$ and $\widetilde{f}_A$ of $Z$ and $A$, respectively, such that the following diagram commutes:
\[
\xymatrix{
X \ar[d]_{f} & & \widetilde{X} \ar[ll]_{\pi} \ar[d]_{\widetilde{f}} \ar[rr]^{\isom} & & Z \times A \ar[d]^{\widetilde{f}_Z \times \widetilde{f}_A} \\
X & & \widetilde{X} \ar[ll]_{\pi} \ar[rr]^{\isom} & & Z \times A .
}
\]
\end{itemize}
\end{lemma}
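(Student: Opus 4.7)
The plan is to construct the cover $\widetilde{X}$ in two stages, following the method of Kawamata, Nakayama--Zhang. First I would pass to a quasi-\'etale cover $X_1 \to X$ after which the Albanese map $\alpha\colon X_1 \to A:=\Alb(X_1)$ is a well-defined surjective morphism with connected fibers; by Kawamata's theorem on klt varieties with numerically trivial canonical class (combined with Kollár's result that a numerically trivial line bundle with trivial Iitaka dimension is torsion, applied to relatively trivial bundles), $\alpha$ becomes an isotrivial fiber bundle. A further quasi-\'etale base change $\widetilde{X}\to X_1$, pulled back from a suitable isogeny $A'\to A$, trivializes this bundle and yields the product decomposition $\widetilde{X}\isom Z\times A'$, where $Z$ is the fiber. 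The fact that $Z$ has at most canonical singularities with $K_Z\sim 0$ follows from the quasi-\'etale nature of $\pi$ and adjunction along the product, and $\widetilde{q}(Z)=0$ by construction (since any further quasi-\'etale cover of $Z$ that produced irregularity would, after base change to $A'$, enlarge $A'$, contradicting maximality). The dimension of $A'$ equals $\widetilde{q}(X)$ precisely because this cover is constructed to maximize the abelian factor.

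For the $f$-equivariance of the cover, I would argue that the tower $\widetilde{X}\to X_1\to X$ is canonical up to finite choices: the first cover is determined by the quasi-\'etale fundamental group data needed to make Albanese functorial, and the trivializing isogeny $A'\to A$ corresponds to a distinguished subgroup of the Galois group. Since $f$ acts on all of these fundamental-group-theoretic invariants, we can choose the subgroups to be $f$-stable (taking the intersection of $f^*$-translates if necessary, at the cost of enlarging the cover but not affecting the stated properties). This produces a lift $\widetilde{f}\colon\widetilde{X}\to\widetilde{X}$ via the Galois correspondence; any two lifts differ by a deck transformation.

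For the product decomposition of $\widetilde{f}$, the second projection $\widetilde{X}\to A'$ coincides with the Albanese map of $\widetilde{X}$ (this is why we enlarged to $A'$), so by functoriality of Albanese, $\widetilde{f}$ descends to an affine automorphism $\widetilde{f}_{A}\colon A'\to A'$, and we can write
\[
\widetilde{f}(z,a)=\bigl(g(z,a),\widetilde{f}_A(a)\bigr)
\]
for a family $g(\cdot,a)\colon Z\to Z$ of isomorphisms parameterized by $a\in A'$. The rigidity input is that $\Hom(A',\aut(Z))$ is trivial: the connected component $\aut^{0}(Z)$ vanishes because a positive-dimensional connected automorphism group of a klt Calabi--Yau $Z$ would force $\widetilde{q}(Z)>0$, and maps from a connected variety $A'$ to a discrete group are constant. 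Therefore $g(z,a)=\widetilde{f}_Z(z)$ is independent of $a$, giving $\widetilde{f}=\widetilde{f}_Z\times\widetilde{f}_A$.

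The main obstacle I expect is the simultaneous control of $f$-equivariance at every stage: ensuring the tower of covers and the choice of origin on $A'$ can be selected so that $f$ lifts to an \emph{honest} automorphism (not just an iterate), and so that the product structure and the lift are compatible. The clean way around this is to observe that the choice of isogeny $A'\to A$ and the trivialization of the bundle are canonical modulo finite data on which $f$ acts, so replacing each finite-choice set by the orbit of $f$ only enlarges the cover. The rigidity statement $\aut^0(Z)=1$ for weak Calabi--Yau $Z$ is the essential geometric ingredient that makes the final splitting automatic.
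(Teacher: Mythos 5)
The paper does not prove this lemma; it is stated as ``Kawamata--Nakayama--Zhang's weak decomposition theorem'' and cited from \cite{Kawamata85,NZ10} and \cite[Lemma~2.7]{HL19}. Your sketch follows the proof strategy found in those references: pass to a quasi-\'etale cover after which the Albanese map is a (locally trivial, isotrivial) fibration, trivialize it by a further quasi-\'etale base change along an isogeny of the Albanese, and then split the lifted automorphism using rigidity of $Z$. The two points on which your write-up is thinnest are exactly the two places the references work hardest. First, the assertion that after a quasi-\'etale cover the Albanese map becomes a fiber bundle is the substantive geometric input (Kawamata's theorem in the smooth case, Nakayama--Zhang in the klt case), and attributing it to ``Koll\'ar's torsion result for relatively trivial bundles'' is not an accurate citation, though the conclusion you use is correct. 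Second, for the $f$-equivariance of the tower, ``taking the intersection of $f^{*}$-translates'' needs a reason to remain finite index; the clean version is that the relevant fundamental group (of the regular locus) is finitely generated, so the $\langle f_{*} \rangle$-orbit of the chosen finite-index subgroup is a finite set, and its intersection is a finite-index $f_{*}$-stable subgroup, giving a cover to which $f$ itself (not just a power) lifts. Your final rigidity step, that a morphism $A' \to \Aut(Z)$ is constant because $\Aut^{0}(Z)$ is trivial for a weak Calabi--Yau $Z$, is the correct concluding argument and is where the hypothesis $\widetilde{q}(Z)=0$ is genuinely used.
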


\begin{proof}[Proof of \cref{wcy}]
This follows from abundance for numerically trivial canonical divisor, \cref{lemma:lifting},
\cref{generically finite}(1), \cref{Sano}, and  \cref{Q-abelian}.
\end{proof}

Certain conditions on automorphism groups of a variety make us possible to run MMP equivariantly and 
reduce KSC to that of special varieties.
For a normal projective variety $X$ of dimension $n$ and a subgroup $G$ of the automorphism group $\Aut(X)$ of $X$,
consider the following condition:

\begin{hyp}[$n,r$] \label{hyp}
$G \isom \bZ^{r}$ with $1 \le r \le n-1$ and  $G$ is of positive entropy, i.e., $\delta_{g}>1$ for all $g\in G \setminus \{\id\}$.
\end{hyp}

\begin{rmk}
In \cite{DS04}, they prove that every commutative subgroup $G$ of $\Aut(X)$ has rank at most $\dim X-1$.
\end{rmk}

\begin{theorem}\label{MainProp}
Let $X$ be a normal projective variety of dimension $n$ with at most klt singularities and $G$ be a subgroup of $\Aut(X)$.
Then the following statements hold.
\begin{enumerate}
\item Suppose  $X$ and $G$ satisfies {\em \cref{hyp}}$(n, n-1)$ and $X$ is not rationally connected.
Then \cref{KSC} is true for all automorphisms $g\in G$.

\item Suppose $K_X\equiv 0$ and $(X, G)$ satisfies {\em \cref{hyp}}$(n, n-2)$.
Then \cref{KSC} is true for all automorphisms $g\in G$ if  \cref{KSC} is true for all automorphisms of weak Calabi-Yau varieties of dimension $n$.
\end{enumerate}
\end{theorem}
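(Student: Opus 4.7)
My strategy is to reduce each part to \cref{Q-abelian} and to the standing hypothesis on weak Calabi-Yau varieties, using the decomposition \cref{lemma:lifting} together with the reduction results \cref{generically finite} and \cref{Sano}.

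For part (1), the key step is a structure theorem: a klt projective variety $X$ of dimension $n$ admitting $G \subset \Aut(X)$ with \cref{hyp}$(n, n-1)$ is either rationally connected or $Q$-abelian. To prove this I would run a $G$-equivariant MMP on $X$ and analyse the output. The positive-entropy rank is essentially preserved under equivariant divisorial contractions and flips, so on a good-minimal-model output one has $K_X \equiv 0$, and a Tits-type argument (in the spirit of Dinh-Sibony and Zhang), combined with \cref{lemma:lifting} and \cref{wcy}, forces $X$ to be $Q$-abelian. On a Mori fiber space output $X \to Y$ with general fiber $F$, the base $Y$ inherits a $G$-action, and by an inductive argument on dimension one shows that either the reduction eventually terminates in a point, so that $X$ is rationally connected since the fibers are rationally connected, or else the base is of the desired type. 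The hypothesis that $X$ is not rationally connected rules out the first case, and \cref{Q-abelian} then delivers KSC for every $g \in G$.

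For part (2), abundance for klt varieties with numerically trivial canonical divisor gives $K_X \sim_{\bQ} 0$, so \cref{lemma:lifting} applies. After replacing $G$ by a finite-index subgroup (harmless since KSC for $g$ is equivalent to KSC for any positive power of $g$), I lift it to a subgroup $\widetilde G \subset \Aut(\widetilde X)$, where $\widetilde X \isom Z \times A$ with $Z$ weak Calabi-Yau and $A$ abelian, and every $\widetilde g \in \widetilde G$ splits as $\widetilde g_Z \times \widetilde g_A$. Then \cref{Sano} reduces KSC for $\widetilde g$ to KSC for the two factors: the abelian factor is handled by \cref{Q-abelian}, and the weak Calabi-Yau factor by the standing hypothesis. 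Finally, $\pi \colon \widetilde X \to X$ is finite and quasi-\'etale, in particular generically finite, so \cref{generically finite}(1) transfers KSC from $\widetilde g$ back to $g$.

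The main obstacle is part (1): producing the structure theorem that a non-rationally connected klt $X$ with \cref{hyp}$(n, n-1)$ is $Q$-abelian. In the smooth case this is known via Tits-type results, but in the klt setting one must genuinely run an equivariant MMP and track how the positive-entropy rank $n-1$ is transported across each divisorial contraction, flip, and Mori fibration, and then handle each MMP endpoint separately. A secondary subtlety in part (2) is that $\dim Z$ may be strictly smaller than $n$, so the standing hypothesis must be interpreted (or established by induction on $n$) for weak Calabi-Yau varieties of dimension at most $n$.
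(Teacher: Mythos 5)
Your reduction template (use \cref{lemma:lifting}, \cref{generically finite}, \cref{Sano}, and \cref{Q-abelian} to transfer KSC down to the building blocks) is the right shape, but in both parts you have replaced the decisive ingredient --- which in the paper is an explicit citation --- with a sketch that does not close.

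For (1), the content of the statement is a structure theorem: under \cref{hyp}$(n,n-1)$ with $X$ not rationally connected, after passing to a finite-index subgroup, $X$ is $G$-equivariantly \emph{birational} to a $Q$-abelian variety. This is exactly \cite[Theorems 1.1 and 2.4]{Zhang16} (see also \cite[Theorem 1.1]{HL19}), and the paper simply invokes it and then applies \cref{generically finite}(1) and \cref{Q-abelian}. You instead propose to rerun a $G$-equivariant MMP and a Tits-type argument to reprove this from scratch; you yourself flag this as ``the main obstacle,'' and indeed what you describe is the substance of those references, not a proof. Also note the conclusion is ``$G$-birational to $Q$-abelian,'' not ``$X$ is $Q$-abelian'' as you state --- this distinction matters and is the reason \cref{generically finite}(1) is needed.

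For (2), your route via \cref{lemma:lifting} alone is genuinely different from the paper's, and it does not quite work as stated. Applying \cref{lemma:lifting} to $X$ gives $Z\times A$ with $Z$ weak Calabi--Yau of \emph{some} dimension $\le n$, so your argument needs KSC for weak Calabi--Yau varieties of \emph{all} dimensions up to $n$, while the theorem's hypothesis is only for dimension exactly $n$. You notice this but propose to ``interpret'' or ``establish by induction,'' which is not a resolution. The point you are missing is that \cref{hyp}$(n,n-2)$ is not decorative: it is exactly the input to \cite[Theorem 1.2]{HL19}, which (after shrinking $G$) furnishes a $G$-equivariant quasi-\'etale cover $Y\to X$ with $Y$ $G$-equivariantly birational to a weak Calabi--Yau variety of dimension $n$, an abelian variety, or a product of a weak Calabi--Yau \emph{surface} and an abelian variety; in the last case the automorphisms split \cite[Lemma 2.14]{NZ10} and the surface factor is handled by \cite[Theorem 1.3]{MZ19}. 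This trichotomy is what lets the paper get away with the dimension-$n$ hypothesis only. Without invoking that structure theorem, your argument proves a weaker statement with a stronger hypothesis on the weak Calabi--Yau input.
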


\begin{proof}[Proof of \cref{MainProp}]
(1) Since $X$ and $G$ satisfies {\em \cref{hyp}}$(n, n-1)$ and $X$ is not rationally connected, then by \cite[Theorems 1.1 and 2.4]{Zhang16} or \cite[Theorem 1.1]{HL19}, after replacing $G$ by a finite-index subgroup, $X$ is  $G$-equivariantly birational to a $Q$-abelian variety $Y$.
Thus statement follows from \cref{generically finite,Q-abelian}.

(2) Since $K_X\equiv0$ and $X$ and $G$ satisfies {\em \cref{hyp}}$(n, n-2)$, then by \cite[Theorem 1.2]{HL19},  after replacing $G$ by a finite-index subgroup, there is a $G$-equivariant quasi-\'etale morphism $Y\to X$, such that $Y$ is $G$-equivariantly birational to 
either a weak Calabi--Yau variety, an abelian variety, or a product of a weak Calabi--Yau surface and an abelian variety.
In the last case automorphisms on the product are split, i.e. they are products of automorphisms on each factor (\cite[Lemma 2.14]{NZ10}).
As for endomorphisms on surfaces, \cref{KSC} is proved in \cite[Theorem 1.3]{MZ19}.
Thus the statement follows from  \cref{lemma:lifting,generically finite,Q-abelian,Sano}.
\end{proof}

\subsection{Albanese morphisms}

To prove KSC, we can assume that the albanese morphism is surjective due to the following proposition.

\begin{prop}\label{alb}
Let $f$ be a surjective endomorphism of a normal projective variety $X$. 
If $f$ has Zariski dense orbits, then the Albanese morphism $\pi : X\to A$ is surjective.
\end{prop}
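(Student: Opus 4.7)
The plan is to lift the dynamics to the Albanese variety via its universal property, and then rule out a proper image by combining Ueno's structure theorem for subvarieties of abelian varieties with the nonexistence of Zariski dense orbits on varieties of positive Kodaira dimension (Remark \ref{NZ09}).

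First I would use the Albanese universal property to obtain a morphism $f_A\colon A\to A$, necessarily of the form $a\mapsto \phi(a)+c$ for a group endomorphism $\phi$ of $A$ and a constant $c\in A$, making the square $\pi\circ f=f_A\circ\pi$ commute. Set $Y:=\pi(X)\subset A$. Since $f$ is surjective on $X$, one has $f_A(Y)=\pi(f(X))=\pi(X)=Y$, so $Y$ is $f_A$-stable; moreover, if $x\in X(\overline{\bQ})$ has Zariski dense $f$-orbit, then $\pi(x)$ has Zariski dense $f_A$-orbit in $Y$.

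Next I would invoke Ueno's theorem: letting $B:=\mathrm{Stab}^{0}(Y)$ be the identity component of the stabilizer of $Y$ in $A$, $B$ is an abelian subvariety, $Y$ is $B$-invariant (so fibered in translates of $B$), and its image $W:=Y/B\subset A/B$ is a subvariety of general type. The key intermediate step is that $\phi(B)\subset B$, so that $f_A$ descends to a surjective endomorphism $\bar f$ of $W$. This follows from a short group-theoretic computation: if $b\in B$ then $\phi(b)+Y=\phi(b+Y)+c=\phi(Y)+c=Y$, hence $\phi(b)\in\mathrm{Stab}(Y)$, and connectedness yields $\phi(b)\in B$. The Zariski dense $f_A$-orbit in $Y$ then descends to a Zariski dense $\bar f$-orbit in $W$.

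If $\dim W>0$, then $W$ has positive (in fact maximal) Kodaira dimension, which directly contradicts Remark \ref{NZ09}. Hence $\dim W=0$; since $Y$ is irreducible and $B$-stable, it must be a single translate of $B$, and after normalizing the Albanese so that $\pi(x_0)=0\in Y$ this translate is $B$ itself. To conclude, I would invoke the standard consequence of the universal property that $\pi(X)$ is not contained in any proper abelian subvariety of $A$, forcing $B=A$, hence $Y=A$, so $\pi$ is surjective. The main subtle point is the descent verification $\phi(B)\subset B$; the rest of the argument is a clean assembly of standard tools (the Albanese universal property, Ueno's structure theorem, and Remark \ref{NZ09}).
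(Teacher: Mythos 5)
Your proof is correct and follows essentially the same strategy as the paper's: lift $f$ to $A$ via the Albanese universal property, pass to the quotient by the identity component of $\mathrm{Stab}(Y)$, and use the Ueno--Mori theorem that the image there is of general type. The one cosmetic difference is the final contradiction: the paper observes directly that a surjective self-map of a positive-dimensional variety of general type has finite order (hence no dense orbit), while you instead appeal to Remark~\ref{NZ09} (positive Kodaira dimension has no dense orbits) to force $\dim W = 0$ and then close the loop with the fact that $\pi(X)$ generates $A$; the paper uses that same generation fact earlier, to ensure $\dim Z > 0$, so both arguments rely on identical ingredients and are equally valid.
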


\begin{proof}
Let $Y=\pi(X)$. Suppose $Y \neq A$.
Let $B$ be the identity component of the stabilizer $\mathrm{Stab}(Y)$ of $Y$ in $A$. 
Since $Y \neq A$, we have $B \neq A$.
Then $A/B$ is a positive dimensional abelian variety and the image $Z$ of $Y$ in $A/B$
has finite stabilizer in $A/B$.
Then $Z$ is of general type (cf. \cite[Theorem 3.7]{Mori85}).
Note that $\dim Z>0$ because it generates $A/B$. 

Now, by the universality of the albanese morphism, $f$ induces an endomorphism $T_{a}\circ g$ on $A$,
where $T_{a}$ is the translation by an element $a \in A$ and $g$ is a group endomorphism of $A$.
Then for any $b \in B$, we have 
\[
Y+g(b)=(T_{a}\circ g)(Y)+g(b)=g(Y)+g(b)+a=g(Y+b)+a=g(Y)+a=Y.
\]
Therefore, we have $g(B)\subset B$.
Thus, $g$ induces an group endomorphism $ \overline{g}$ on $A/B$.
Then $f$ induces $T_{a}\circ g$ on $A$ and $T_{ \overline{a}}\circ \overline{g}$ on $A/B$, where $ \overline{a}$ is the image of $a$ in $A/B$.
Since $Z$, which is the image of $X$ in $A/B$, is of general type,  $T_{ \overline{a}}\circ \overline{g}$ restricted on $Z$ is a finite order automorphism.
This contradicts to the fact that $f$ has Zariski dense orbits.

\end{proof}

\section{Projective bundles}\label{projbdl}

\begin{prop}
Let $Y$ be a smooth Fano variety over $ \overline{\bbQ}$ of Picard number one.
Let $X=\bbP_{Y}( \mathcal{E})$ be a projective bundle over $Y$.
Then KSC holds for all surjective endomorphisms $f \colon X \longrightarrow X$.
\end{prop}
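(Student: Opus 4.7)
The plan is to reduce KSC for $f$ to KSC for a surjective endomorphism $g$ on the base $Y$, exploiting that $Y$ has Picard number one. Since $\rho(Y)=1$, the projective bundle formula gives $\rho(X)=2$, and I fix a basis $\{\xi,\pi^{*}H\}$ of $N^{1}(X)_{\bR}$, where $H$ is the ample generator of $\Pic(Y)$ and $\xi=c_{1}(\mathcal{O}_{X}(1))$ is the tautological class. Assuming $\rank\mathcal{E}\ge2$ (the case $\rank\mathcal{E}=1$ being trivial), $\pi^{*}H$ is nef with $(\pi^{*}H)^{\dim X}=0$, so it generates one of the two extremal rays of $\Nef(X)$.

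Next I would show that, after possibly replacing $f$ by $f^{2}$, the projection $\pi$ is $f$-equivariant. Since $f^{*}$ is an $\bR$-linear automorphism of the two-dimensional space $N^{1}(X)_{\bR}$ and preserves the nef cone, it either fixes each extremal nef ray or swaps them. In the first case $f^{*}\pi^{*}H$ is a positive multiple of $\pi^{*}H$ and $f$ descends to a surjective $g\colon Y\lra Y$ with $\pi\circ f=g\circ\pi$; the swap case can only occur when $X$ carries a second $\bbP$-bundle structure, in which case $f^{2}$ fixes each ray and already descends. I therefore assume henceforth that $\pi\circ f=g\circ\pi$.

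Because $\rho(Y)=1$, I have $g^{*}H=dH$ for some integer $d\ge1$, so $g$ is polarized, $\delta_{g}=d$, and KSC for $g$ is the classical Kawaguchi-Silverman theorem for polarized morphisms. Writing $f^{*}\xi=a\xi+b\pi^{*}H$, the matrix of $f^{*}$ in the basis $(\xi,\pi^{*}H)$ is lower triangular with diagonal entries $a$ and $d$, so $\delta_{f}=\max\{a,d\}$. If $\delta_{f}=d$, then $\delta_{f}=\delta_{g}$ and \cref{generically finite}(2) transfers KSC from $g$ to $f$. If instead $a>d$, then the $\delta_{f}$-eigenvector is $E=(a-d)\xi+b\pi^{*}H$; Perron-Frobenius applied to $f^{*}$ on $\Nef(X)$ ensures that a positive multiple of $E$ is nef, and in the generic subcase $E$ lies in the interior of the nef cone, so $f$ is polarized by an ample eigendivisor and KSC follows from the standard polarized-map argument.

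The main obstacle will be the boundary subcase $a>d$ in which $E$ lies on the second extremal ray of $\Nef(X)$ and so is not ample. Here $E$ determines a second $f$-equivariant nontrivial contraction $\tau\colon X\lra Z$, and I would try to reduce KSC for $f$ either to KSC on $Z$ (via \cref{generically finite}, comparing $\delta_{f}$ with the appropriate dynamical degree on $Z$) or to a product structure on $X$ (then applying \cref{Sano}). Making this reduction go through uniformly, without extra hypotheses on $\mathcal{E}$ such as nefness, is the delicate step.
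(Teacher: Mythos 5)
Your reduction to the $\delta_{f}=\delta_{g}$ case and your identification of the eigendivisor $E=(a-d)\xi+b\pi^{*}H$ are fine, and the overall shape (descend to $g$ on $Y$, exploit $\rho(Y)=1$ to polarize $g$) matches the paper. But from there your route diverges from the paper's, and the divergence is exactly where your argument has a real gap.

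You split the case $\delta_{f}>\delta_{g}$ according to whether the leading nef eigendivisor $E$ is ample or sits on the second extremal ray, and you explicitly leave the boundary subcase unresolved. This is not a loose end that can be patched by the methods you have in hand: even granting that the other extremal ray of $\Nef(X)$ is contractible and the contraction $\tau\colon X\to Z$ is $f$-equivariant, you would still need to compare $\delta_{f}$ with the relevant dynamical degree on $Z$, control the fibers of $\tau$, and somehow invoke a KSC result on $Z$ — none of which is set up. Moreover there is no reason the boundary case is empty: for $X=Y\times\bbP^{r}$ with a map that is an automorphism on $Y$ and a degree $>1$ endomorphism of $\bbP^{r}$, the eigendivisor is exactly $\xi$, which is nef but not ample.

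The paper's proof avoids the eigendivisor dichotomy entirely by splitting on $\delta_{g}$ rather than on the position of $E$. When $\delta_{f}>\delta_{g}>1$, all eigenvalues of $f^{*}$ on $N^{1}(X)_{\bR}$ exceed $1$, so $f$ is int-amplified, and since $X$ is a $\bbP$-bundle over a Fano it is smooth rationally connected; KSC then follows from the Matsuzawa--Yoshikawa theorem for int-amplified endomorphisms on rationally connected varieties. When $\delta_{f}>\delta_{g}=1$, the paper shows $\mathcal{E}$ splits as a direct sum of line bundles (Grothendieck if $\dim Y=1$; Kodaira vanishing plus simple connectedness and the Amerik--Kuznetsova theorem if $\dim Y>1$), notes that $g$ is then an automorphism acting trivially on $\Pic Y=\bbZ$, hence $g^{*}\mathcal{E}\simeq\mathcal{E}$, and invokes Amerik's theorem to conclude $X\simeq Y\times\bbP^{r}$; KSC then follows from the product lemma and KSC for polarized maps. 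These two tools — int-amplified endomorphisms on rationally connected varieties and the splitting of $\mathcal{E}$ forcing a product structure — are precisely what is missing from your sketch, and they are what closes the boundary case you flagged. Your Step 1 descent argument is also less robust than citing Amerik's discussion directly (your "second $\bbP$-bundle structure" remark is a non sequitur, though the conclusion that $f^{2}$ descends after the ray-swap is correct).
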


\begin{proof}
Let $\pi \colon X \longrightarrow Y$ be the projection.

Step 1.
Replacing $f$ with its iterate, we may assume that $f$ induces an endomorphism $g \colon Y \longrightarrow Y$ such that 
$g\circ \pi =\pi \circ f$ (cf. discussion before Theorem 2 in \cite{Am03}).

Step 2.
Since the Picard number of $Y$ is one, $g$ is polarized and KSC holds for $g$.
Therefore, we may assume $\delta_{f}>\delta_{g}$.
If $\delta_{g}>1$, then $f$ is an int-amplified endomorphism.
Since $X$ is smooth rationally connected, KSC holds for $f$ by \cite[Theorem 1.1]{MY19}.

Step 3.
Suppose $\delta_{f}>\delta_{g}=1$.
We claim that $ \mathcal{E}$ is a direct sum of invertible sheaves, i.e. $ \mathcal{E}\simeq \bigoplus_{i} \mathcal{L}_{i}$ for some $ \mathcal{L}_{i}$.
If $\dim Y=1$, then $Y\simeq \bbP^{1}$ and the claim follows from Grothendieck's theorem.
Suppose $\dim Y>1$. By Kodaira vanishing and Serre duality, $H^{1}(Y, \mathcal{L})=0$ for all invertible sheaves $ \mathcal{L}$ on $Y$
(we use the assumption that the Picard number of $Y$ is one).
Also, $Y$ is simply connected.
We can apply \cite[Theorem 2]{AK17} so that we get the claim.

Now,  $g$ is an automorphism because $Y$ has Picard number one and $\delta_{g}=1$.
Since $Y$ is a smooth Fano, we have $\Pic Y=\bbZ$.
Therefore $g^{*}$ acting on $\Pic Y$ as identity, and we have
$g^{*} \mathcal{E}\simeq \bigoplus_{i} g^{*}\mathcal{L}_{i} \simeq \bigoplus_{i} \mathcal{L}_{i} \simeq \mathcal{E}$.
Thus we get the following commutative diagram:
\[
\xymatrix{
X \ar[rd]^{F} \ar@/_30pt/[rrdd]_{\pi} \ar@/^10pt/[rrrd]^{f}&&& \\
   & X \ar[r]^(.35){h} \ar[rd]_{\pi}& \bbP_{Y}(g^{*} \mathcal{E}) \ar[r] \ar[d] & X \ar[d]^{\pi} \\
   &&Y \ar[r]_{g} & Y
}
\]
where $h$ is the isomorphism induced by the isomorphism $ \mathcal{E} \simeq g^{*} \mathcal{E}$
and $F$ is the morphism induced by the universal property of fiber product.
Since $f$ has degree lager than one on the fibers, so does $F$.
By \cite[Theorem 1]{Am03} and the comment below it and simply connectedness of $Y$, we get $X \simeq Y \times \bbP^{r}$ where $r+1$ is the rank of $ \mathcal{E}$.
By \cite[Theorem 4.6]{Sano16}, 
and KSC for polarized endomorphisms, we are done.
\end{proof}

\section{Canonical heights}\label{canht}

\begin{prop}
Let $X$ be a geometrically integral variety over a number field $K$ and 
suppose $X_{ \overline{K}}$ is $\bQ$-factorial normal projective variety.
Let $D$ be an $\bR$-divisor on $X$.
Suppose $D$ is $\bR$-linearly equivalent to at least two effective $\bR$-divisors, i.e.
there exist effective $\bR$-divisors $D'$ and $D''$ such that $D \sim_{\bR} D' \sim_{\bR} D''$ and $D' \neq D''$.
Let $h_{D}$ be a Weil height function associated with $D$ and $B,d$ positive real numbers.
Then the set
\begin{equation*}
 \bigg\{ P\in X(L) \bigg| h_{D}(P)\le B, K\subseteq L\subseteq \overline{K} \text{ is an intermediate field with } [L: K]\le d \bigg\}
\end{equation*}
is not Zariski dense in $X_{\overline{K}}$.
\end{prop}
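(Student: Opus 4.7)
The plan is to construct a nonconstant rational function $f \in \overline{K}(X)^{\times}$ whose principal divisor $(f)$ is supported entirely in $\Supp D' \cup \Supp D''$, and then to apply Northcott's theorem on $\bbP^{1}$ to the image $f(P)$. This exhibits a proper closed subvariety of $X_{\overline{K}}$ containing the set in question.

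First I would show that, for every $P$ in the set, the height $h_{C}(P)$ is uniformly bounded for every prime divisor $C \in \Supp D' \cup \Supp D''$. Writing $D' = \sum_{i} a_{i} C_{i}$ with $a_{i} > 0$, the relation $h_{D} = h_{D'} + O(1) = \sum_{i} a_{i} h_{C_{i}} + O(1)$ combined with the lower bound $h_{C_{i}} \geq -c_{i}$ on $X_{\overline{K}} \setminus C_{i}$ (valid for every effective prime divisor) yields an upper bound on each $a_{i} h_{C_{i}}(P)$, hence on $h_{C_{i}}(P)$. The same argument applied to $D''$ covers the remaining primes. Any such $P$ must in fact lie outside $\Supp D' \cup \Supp D''$, since otherwise $h_{D}(P)$ would be unbounded.

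Next I would construct $f$ as follows. Choose principal divisors $(h_{1}), \dots, (h_{m}) \in \Div(X_{\overline{K}})$ that are $\bZ$-linearly independent and whose $\bR$-span contains $D' - D''$; such a collection exists by taking a $\bZ$-basis of the $\bZ$-span of the principal divisors appearing in any expression of $D' - D''$ as an $\bR$-linear combination of principals. Write $D' - D'' = \sum_{j=1}^{m} s_{j}(h_{j})$ with $s = (s_{j}) \in \bR^{m} \setminus \{0\}$. Reading off the coefficient along an arbitrary prime $C$ gives $\mathrm{mult}_{C} D' - \mathrm{mult}_{C} D'' = \sum_{j} s_{j} \mathrm{ord}_{C}(h_{j})$; for $C \notin \Supp D' \cup \Supp D''$ the left side vanishes. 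Let $\mathcal{S} \subset \bR^{m}$ be the subspace cut out by all these constraints, one for each such $C$. Because the defining equations have integer coefficients, $\mathcal{S}$ is a rational subspace of $\bR^{m}$, and since $s \in \mathcal{S} \setminus \{0\}$ the intersection $\mathcal{S} \cap \bQ^{m}$ contains a nonzero vector $r$. Picking $N \geq 1$ with $Nr \in \bZ^{m}$ and setting $f := \prod_{j} h_{j}^{Nr_{j}}$ produces a nonconstant rational function whose divisor $(f) = N \sum_{j} r_{j}(h_{j})$ is nonzero (by $\bZ$-independence of the $(h_{j})$) and supported in $\Supp D' \cup \Supp D''$ by construction.

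To finish, for $P$ in our set the decomposition $h_{(f)_{0}}(P) = \sum_{C} \mathrm{mult}_{C}(f)_{0} \cdot h_{C}(P) + O(1)$ together with the first step shows $h_{(f)_{0}}(P)$, and therefore $h_{\bbP^{1}}(f(P)) = h_{(f)_{0}}(P) + O(1)$, is bounded uniformly on the set. Northcott's theorem on $\bbP^{1}$ with the degree bound $d$ then forces $f(P)$ to lie in a finite set $S \subset \bbP^{1}(\overline{K})$. Since $(f) \neq 0$, $f$ is nonconstant as a rational map $X_{\overline{K}} \dashrightarrow \bbP^{1}$, so $f^{-1}(S)$ is a proper closed subvariety of $X_{\overline{K}}$, and the set in the proposition is contained in $f^{-1}(S) \cup \Supp D' \cup \Supp D''$, hence is not Zariski dense. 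The main technical point is the rationality of $\mathcal{S}$ produced in the previous paragraph; this rests on the integrality of the orders of vanishing $\mathrm{ord}_{C}(h_{j})$, which is what allows the passage from the real equation $D' - D'' \sim_{\bR} 0$ to an honest principal $\bZ$-divisor with the required support constraint.
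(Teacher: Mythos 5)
Your proof is correct, but takes a genuinely different route from the paper's. The paper first replaces $D'$ and $D''$ by $D'-D_{0}$ and $D''-D_{0}$, where $D_{0}$ is the componentwise minimum, so that the two effective representatives of the class share no prime component; it then invokes \cite[Proposition 3.5.4]{BCHM10} to conclude that $D$ is $\bR$-linearly equivalent to an effective $\bR$-divisor one of whose components has positive Iitaka dimension, and delegates the remaining height estimate to \cite[Proposition 3.5]{Matsuzawa19}. You instead extract a nonconstant rational function directly from the relation $D'-D'' \sim_{\bR} 0$; the key observation that the support constraint cuts out a $\bQ$-rational subspace of $\bR^{m}$ containing the nonzero real solution $s$, hence a nonzero rational solution, is the right way to pass from an $\bR$-linear to an honest $\bZ$-linear relation with controlled support. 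Your argument is more elementary and self-contained (it avoids BCHM's movable-decomposition lemma and the external proposition), while the paper's is shorter by citation. Both ultimately feed a dominant map to a positive-dimensional target into Northcott: for the paper this is via the positive-Iitaka-dimension component, for you it is the explicit map $f \colon X \dashrightarrow \bbP^{1}$. One small imprecision worth tidying: the asserted identity $h_{\bbP^{1}}(f(P)) = h_{(f)_{0}}(P) + O(1)$ for a rational (not regular) map deserves a word of justification; the inequality $h_{\bbP^{1}}(f(P)) \le h_{(f)_{\infty}}(P) + O(1)$ on $X \setminus \Supp(f)$, which follows from the product formula and the relation of local Weil functions for $(f)=(f)_{0}-(f)_{\infty}$, is all you need, and $(f)_{\infty}$ is likewise supported in $\Supp D' \cup \Supp D''$.
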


\begin{proof}
Step 1.
There is an effective $\bbR$-divisor $\sum_{i=1}^{r} c_{i}F_{i}$ where $r\geq1$ and $F_{i}$ are prime divisor such that
$D\sim_{\bbR} \sum_{i=1}^{r} c_{i}F_{i}$, $c_{1}\neq0$, and $F_{1}$ has positive Iitaka dimension.

Let $D'=\sum a_{i}E_{i}$ and $D''=\sum b_{i}E_{i}$ where $E_{i}$'s are prime divisors.
Set $D_{0}=\sum \min\{a_{i}, b_{i}\}E_{i}$.
Then $D-D_{0}\sim_{\bbR} D'-D_{0} \sim_{\bbR} D''-D_{0}$ and 
two effective divisors $D'-D_{0}$ and $D''-D_{0}$ have no common components.
By \cite[Proposition 3.5.4]{BCHM10}, $D-D_{0} \sim_{\bbR} M$ where $M$ is an effective $\bbR$-divisor such that
every component is movable.

Step 2.
By step one, $h_{D}=\sum c_{i}h_{F_{i}}+O(1)\geq c_{1}h_{F_{1}}+O(1)$ on $(X_{ \overline{K}}\setminus \bigcup_{i\geq2}\Supp F_{i})( \overline{K})$.
Now the proposition follows from \cite[Proposition 3.5]{Matsuzawa19}.

\end{proof}

\begin{prop}
\text{(cf. \cite[Proposition 3.6]{Matsuzawa19})}
Let $X$ be a $\bQ$-factorial normal projective variety and $f \colon X \longrightarrow X$ be a surjective morphism with $\delta_{f}>1$.
\begin{enumerate}
\item If there is an $\bR$-divisor $D$ which is $\bR$-linearly equivalent to at least two effective $\bR$-divisors
such that $f^{*}D \sim_{\bR} \delta_{f}D$, then KSC holds for $f$.
\item Suppose $f$ is an automorphism. 
Suppose further that there are $\bR$-divisors $D_{+}$ and $D_{-}$ such that $f^{*}D_{+} \sim_{\bR} \delta_{f}D_{+}$, $(f^{-1})^{*}D_{-} \sim_{\bR} \delta_{f^{-1}}D_{-}$
and $D_{+}+D_{-}$ is $\bR$-linearly equivalent to at least two effective $\bR$-divisors.
Then KSC holds for $f$.
\end{enumerate}
\end{prop}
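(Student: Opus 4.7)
The plan is to build canonical height functions by Tate's telescoping and to use the previous proposition to force those heights to be strictly positive on Zariski-dense orbits; KSC then follows by comparison with an ample height together with the standard upper bound $\alpha_f(x)\le \delta_f$ for morphisms \cite{KS16-tams}. For part (1), set
\[
\hat{h}_D(x) := \lim_{n\to\infty} \delta_f^{-n} h_D(f^n(x)).
\]
Using $f^{*}D\sim_{\bR} \delta_f D$, the successive differences $\delta_f^{-(n+1)}h_D(f^{n+1}(x)) - \delta_f^{-n}h_D(f^n(x))$ are dominated by a constant multiple of $\delta_f^{-n-1}$, uniformly in $x$, so the limit exists, $\hat{h}_D = h_D+O(1)$ uniformly, and $\hat{h}_D\circ f = \delta_f\,\hat{h}_D$. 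Hence $h_D(f^n(x)) = \delta_f^n\hat{h}_D(x) + O(1)$ with the $O(1)$ independent of $n$. Now suppose $x$ has a Zariski-dense forward orbit and fix a number field $L$ over which $x$ and $f$ are defined, so that each $f^n(x)\in X(L)$ has uniformly bounded degree. If $\hat{h}_D(x) \le 0$ then $h_D(f^n(x)) \le C$ for every $n\ge 0$, placing $\mathcal{O}_f(x)$ inside a set which the previous proposition declares not Zariski dense --- a contradiction. Thus $\hat{h}_D(x)>0$, giving $h_D(f^n(x))^{1/n}\to \delta_f$; picking $c>0$ with $cH-D$ ample for an ample $H$ gives $h_D \le c\,h_H + O(1)$ and hence $\alpha_f(x) \ge \delta_f$.

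For part (2), apply the same telescoping construction to $f^{-1}$ and $D_{-}$ to obtain a canonical height $\hat{h}_{D_{-}}$ satisfying $\hat{h}_{D_{-}}(f^n(x)) = \delta_{f^{-1}}^{-n}\hat{h}_{D_{-}}(x)$. Assuming $\delta_{f^{-1}}>1$, this quantity is uniformly bounded for $n\ge 0$, and combined with $\hat{h}_{D_{\pm}} = h_{D_{\pm}} + O(1)$ one obtains
\[
h_{D_{+}+D_{-}}(f^n(x)) = \delta_f^n\,\hat{h}_{D_{+}}(x) + O(1) \qquad (n\ge 0).
\]
If $\hat{h}_{D_{+}}(x) \le 0$, the forward orbit again lies in a set of bounded height and bounded degree; applying the previous proposition to $D_{+}+D_{-}$ (which by hypothesis is $\bR$-linearly equivalent to two distinct effective $\bR$-divisors) contradicts Zariski density. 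So $\hat{h}_{D_{+}}(x)>0$, and the argument of part (1) closes with $\alpha_f(x) = \delta_f$.

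The main obstacle I anticipate is the edge case $\delta_{f^{-1}} = 1$ in part (2): Tate's telescoping for $\hat{h}_{D_{-}}$ then diverges, and the weaker linear bound $h_{D_{-}}(f^n(x)) = O(n)$ suffices to rule out $\hat{h}_{D_{+}}(x) < 0$ but not $\hat{h}_{D_{+}}(x) = 0$, since $O(n)$ growth of $h_{D_{+}+D_{-}}$ is not detected by a bounded-height Northcott set. Resolving this will likely require either restricting attention to $\delta_{f^{-1}}>1$ or a supplementary argument showing that sub-exponential growth of $h_{D_{+}+D_{-}}$ along a Zariski-dense orbit is still impossible, perhaps by a refined use of the movable prime component extracted in Step 1 of the previous proposition. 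Everything else is routine bookkeeping of the various $O(1)$ constants.
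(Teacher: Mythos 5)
Your argument follows the paper's proof essentially verbatim in part (1): Tate telescoping to build $\hat{h}_{D}$ with $\hat{h}_{D}\circ f=\delta_{f}\hat{h}_{D}$ and $\hat{h}_{D}=h_{D}+O(1)$, then invoking the previous proposition to force $\hat{h}_{D}(x)>0$ on a dense orbit, and finishing with an ample comparison and the Kawaguchi--Silverman inequality $\alpha_{f}(x)\le\delta_{f}$. Part (2) is also the same strategy as the paper's.

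The ``main obstacle'' you raise at the end, however, is a phantom: the case $\delta_{f^{-1}}=1$ cannot occur once $f$ is an automorphism with $\delta_{f}>1$. Indeed $f^{*}$ acts on the finitely generated free abelian group $N^{1}(X)$ as a lattice automorphism, so $\det\bigl(f^{*}|_{N^{1}(X)}\bigr)=\pm1$ and the complex eigenvalues $\lambda_{1},\dots,\lambda_{\rho}$ of $f^{*}$ satisfy $\prod_{i}|\lambda_{i}|=1$. If $\delta_{f}=\max_{i}|\lambda_{i}|>1$, then some $|\lambda_{j}|<1$, and since the eigenvalues of $(f^{-1})^{*}=(f^{*})^{-1}$ are $\lambda_{i}^{-1}$ we get $\delta_{f^{-1}}=\bigl(\min_{i}|\lambda_{i}|\bigr)^{-1}>1$. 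So the telescoping sum defining $\hat{h}_{D_{-}}$ converges without further hypotheses, and your plan closes without any supplementary argument. (Also note that, for $n\ge0$, the relation $\hat{h}_{D_{-}}(f^{n}(x))=\delta_{f^{-1}}^{-n}\hat{h}_{D_{-}}(x)$ gives boundedness as soon as $\delta_{f^{-1}}\ge1$, which is automatic; it is only the \emph{construction} of $\hat{h}_{D_{-}}$, not its boundedness along forward orbits, that needs $\delta_{f^{-1}}>1$.) With this observation inserted, the proposal matches the paper's proof exactly.
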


\begin{rmk}
The condition in (2) is a generalization of property (B) in \cite[Definition 3.7]{LS18-IMRN}.
\end{rmk}

\begin{proof}
(1)
Take a height function $h_{D}$ associated with $D$.
Let $\hat{h}(x)=\lim_{n\to \infty}h_{D}(f^{n}(x))/\delta_{f}^{n}$ for all $x\in X( \overline{K})$.
Then $\hat{h}$ satisfies:
\begin{itemize}
\item $\hat{h}(f(x))=\delta_{f}\hat{h}(x)$;
\item $\hat{h}=h_{D}+O(1)$;
\item The set of points $x\in X( \overline{K})$ such that $\hat{h}(x)=0$ and $x$ is defined over a single number field is not Zariski dense.
\end{itemize}
This implies that if $x\in X( \overline{K})$ has Zariski dense $f$-orbit, then $\hat{h}(x)>0$ and $ \alpha_{f}(x)=\delta_{f}$.

(2)
As in (1), let
\begin{align*}
\hat{h}_{D_{+}}(x)=\lim_{n\to \infty}\frac{h_{D_{+}}(f^{n}(x))}{\delta_{f}^{n}}\\
\hat{h}_{D_{-}}(x)=\lim_{n\to \infty}\frac{h_{D_{-}}(f^{-n}(x))}{\delta_{f^{-1}}^{n}}
\end{align*}
for all $x\in X( \overline{K})$.
Then 
\begin{itemize}
\item $\hat{h}_{D_{+}}(f(x))=\delta_{f}\hat{h}_{D_{+}}(x)$, $\hat{h}_{D_{-}}(f(x))=\delta_{f^{-1}}^{-1}\hat{h}_{D_{-}}(x)$;
\item $\hat{h}_{D_{+}}=h_{D_{+}}+O(1)$, $\hat{h}_{D_{-}}=h_{D_{-}}+O(1)$;
\item The set of points $x\in X( \overline{K})$ on which $\hat{h}_{D_{+}}+\hat{h}_{D_{-}}$ is bounded
 and $x$ is defined over a single number field is not Zariski dense.
\end{itemize}
This implies that if $x\in X( \overline{K})$ has Zariski dense $f$-orbit, then $\hat{h}_{D_{+}}(x)>0$ and $ \alpha_{f}(x)=\delta_{f}$.

\end{proof}

\section{Augmented base loci and endomorphisms}\label{augb}

The {\it augmented base locus} of an $\bbR$-Cartier divisor $D$ is defined as follows.

\begin{defn}
The augmented base locus of an $\mathbb R$-Cartier divisor $D$ on a normal projective variety $X$ is the Zariski closed subset
\begin{equation*}
                                 \mathbf{B}_+(D):=\bigcap \mathbf{B}(D-A),
\end{equation*}
where $\mathbf{B}(-)$ stands for the stable base locus and
the intersection is taken over all $\bbR$-Cartier ample divisors $A$ such that $D-A$ is $\bbQ$-Cartier.
\end{defn}

The augmented base locus $\mathbf{B}_{+}(D)$ is equal to the exceptional locus $\mathbb{E}(D)$ if $D$ is nef (cf. \cite{Bir17}).

\begin{prop}\label{f-invariance}
Let $f: X\to X$ be a surjective endomorphism and $D$ an $\mathbb R$-Cartier divisor on a normal projective variety $X$.
If $f^*D\equiv d D$ with $d \in \bbR_{>0}$, then $f(\mathbf{B}_+(D))\subset \mathbf{B}_+(D)$.
Moreover, if $D$ is nef, then $f^{-1}(\mathbf{B}_+(D))=\mathbf{B}_+(D)$.
\end{prop}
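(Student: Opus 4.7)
The plan is to combine the numerical invariance of the augmented base locus with a Kodaira-style decomposition of big divisors and a restriction-to-fiber argument. I will use the characterization that $x \notin \mathbf{B}_+(D)$ if and only if there exist an ample $\bbR$-Cartier divisor $A$ and an effective $\bbR$-Cartier divisor $E$ with $D \equiv A + E$ and $x \notin \Supp E$; in particular $\mathbf{B}_+(\cdot)$ is a numerical invariant, so the hypothesis $f^{*}D \equiv dD$ immediately gives $\mathbf{B}_+(D) = \mathbf{B}_+(f^{*}D)$.

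For the first assertion I would argue by contrapositive. Assume $y := f(x) \notin \mathbf{B}_+(D)$ and pick $D \equiv A + E$ with $A$ ample, $E$ effective, $y \notin \Supp E$. Pulling back gives $dD \equiv f^{*}A + f^{*}E$, where $f^{*}A$ is nef and big (surjectivity of $f$ yields $(f^{*}A)^{\dim X} = \deg(f) \cdot A^{\dim X} > 0$) and $f^{*}E$ is effective with $x \notin \Supp(f^{*}E)$. By Nakamaye's theorem $\mathbf{B}_+(f^{*}A) = \mathbb{E}(f^{*}A)$, and the projection formula identifies this null locus with the union $\Exc(f)$ of positive-dimensional fibers of $f$. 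If $x \notin \Exc(f)$, the characterization applied to $f^{*}A$ yields $f^{*}A \equiv A' + E'$ with $A'$ ample and $x \notin \Supp E'$; combining, $dD \equiv A' + (E' + f^{*}E)$ certifies $x \notin \mathbf{B}_+(dD) = \mathbf{B}_+(D)$. The subcase $x \in \Exc(f)$ is treated by restriction: if $V \ni x$ is an irreducible positive-dimensional component of $f^{-1}(y)$, constancy of $f|_V$ gives $(f^{*}D)|_V \equiv 0$, hence $D|_V \equiv 0$ in $N^{1}(V)_{\bbR}$. Any decomposition $D \equiv A + E$ with $V \not\subset \Supp E$ would restrict to $0 \equiv A|_V + E|_V$, which intersects strictly positively against a high power of an ample class on $V$, a contradiction. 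Hence $V \subset \mathbf{B}_+(D)$, giving $\Exc(f) \subset \mathbf{B}_+(D)$. The main obstacle is to also verify $f(\Exc(f)) \subset \mathbf{B}_+(D)$ so that the two subcases assemble into the forward invariance $f(\mathbf{B}_+(D)) \subset \mathbf{B}_+(D)$; this is automatic when $f$ is finite (often forced by the positivity hypotheses in applications), and more generally I would iterate the restriction argument using $(f^{k})^{*}D \equiv d^{k} D$ for all $k \geq 1$.

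For the ``moreover'', with $D$ nef Nakamaye's theorem gives $\mathbf{B}_+(D) = \mathbb{E}(D)$ and similarly for $f^{*}D$. The identity $(f^{*}D)^{\dim V} \cdot V = d^{\dim V}(D^{\dim V} \cdot V) = D^{\dim V} \cdot f_{*}V$ shows that $V \subset \mathbb{E}(f^{*}D)$ iff either $f|_V$ is not generically finite (so $V \subset \Exc(f)$) or $f(V) \subset \mathbb{E}(D)$, yielding the decomposition $\mathbb{E}(f^{*}D) = \Exc(f) \cup f^{-1}(\mathbb{E}(D))$. Numerical invariance $\mathbb{E}(f^{*}D) = \mathbb{E}(D)$ together with $\Exc(f) \subset \mathbb{E}(D)$ (from the restriction step) give $f^{-1}(\mathbf{B}_+(D)) \subset \mathbf{B}_+(D)$. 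The reverse inclusion $\mathbf{B}_+(D) \subset f^{-1}(\mathbf{B}_+(D))$ is the forward invariance from the first part, completing the equality.
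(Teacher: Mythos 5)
The plan is close to the paper's in spirit (numerical invariance of $\mathbf{B}_+$, pullback, Nakamaye's theorem), but you are missing one standard fact that collapses most of your case analysis: a surjective endomorphism of a normal projective variety is automatically a \emph{finite} morphism. Hence $\Exc(f)=\emptyset$, and the entire subcase $x\in\Exc(f)$ that occupies the bulk of your first part never occurs.

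This matters because, as you yourself observe, your argument has a genuine gap there. In the contrapositive framing you assume $y=f(x)\notin\mathbf{B}_+(D)$ and aim to show $x\notin\mathbf{B}_+(D)$; but in the subcase $x\in\Exc(f)$ you instead derive $x\in\mathbf{B}_+(D)$, which would falsify the contrapositive unless you show that this subcase cannot arise, i.e.\ that $f(\Exc(f))\subset\mathbf{B}_+(D)$. You flag this as ``the main obstacle'' and propose iterating the restriction argument, but you never carry that out, so the proof is incomplete as written. Once you invoke finiteness of $f$, the obstacle is vacuous and the argument in your first subcase ($x\notin\Exc(f)$) is all that is needed; moreover it simplifies, since $f^*A$ is then genuinely ample (pullback of ample by a finite morphism), and there is no need to further decompose it using $\mathbf{B}_+(f^*A)$. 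This is essentially the paper's argument: pick an ample $A$ with $f(x)\notin\mathbf{B}(D-A)$, pull back to get $x\notin\mathbf{B}(f^*D-f^*A)$ with $f^*A$ ample, and conclude $x\notin\mathbf{B}_+(f^*D)=\mathbf{B}_+(dD)=\mathbf{B}_+(D)$.

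Your treatment of the ``moreover'' part via the intersection-theoretic identity $\mathbb{E}(f^*D)=\Exc(f)\cup f^{-1}(\mathbb{E}(D))$ is correct, but again heavier than necessary: with $f$ finite, pullback of the null locus by a finite surjective morphism gives $\mathbb{E}(f^*D)=f^{-1}(\mathbb{E}(D))$ directly, and combined with $\mathbf{B}_+(f^*D)=\mathbf{B}_+(dD)=\mathbf{B}_+(D)$ one obtains the equality $f^{-1}(\mathbf{B}_+(D))=\mathbf{B}_+(D)$ in one line, exactly as in the paper, without ever needing the forward inclusion from part one.
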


\begin{proof}
Suppose $x\notin f^{-1}(\mathbf{B}_+(D))$.
Then there is an $\bbR$-Cartier ample divisor $D$ on $X$ such that $D-A$ is $\bbQ$-Cartier and $f(x)\notin \mathbf{B}(D-A)$.
Then $x \notin \mathbf{B}(f^{*}D-f^{*}A)$, and this implies $x \notin \mathbf{B}_+(f^{*}D)$.
Since the augmented base locus depends only on the numerical class of the divisor and it does not change by positive real multiplication of the divisor,
we have 
$x \notin \mathbf{B}_+(f^{*}D)= \mathbf{B}_+(dD)=\mathbf{B}_+(D)$.
This shows the first statement.

Now assume $D$ is nef.
Since the exceptional locus is compatible with pull-backs by finite surjective morphisms, we have
\begin{align*}
f^{-1}(\mathbf{B}_+(D))=f^{-1}(\mathbb{E}(D))= {\mathbb{E}(f^{*}D)}=\mathbf{B}_+(f^{*}D)=\mathbf{B}_+(dD)=\mathbf{B}_+(D).
\end{align*}

\end{proof}

Let $X$ be a normal projective variety and $f \colon X \longrightarrow X$ be a surjective endomorphism.
Suppose there exits a nef and big $\bbR$-Cartier divisor $D$ such that $f^{*}D\equiv dD$ with $d\in \bbR_{>1}$.
By \cite{MZ18}, $f$ is actually polarized and we can construct canonical heights associated with ampler divisors.
But we may also consider the canonical height $\hat{h}_{D}(x)=\lim_{n \to \infty}h_{D}(f^{n}(x))/d^{n}$ associated with $D$ and this has the following property:
If $x \in (X\setminus \mathbf{B}_+(D))(\overline{\mathbb Q})$, then $x$ is $f$-preperiodic if and only if $\hat{h}_{D}(x)=0$.

Let us give a purely geometric application of \cref{f-invariance}.

\begin{prop}\label{psef=nef}
Let $X$ be a $\bbQ$-factorial normal projective variety and $f \colon X \longrightarrow X$ be a surjective endomorphism 
such that the map $f^{*} \colon N^{1}(X)_{\bbR} \longrightarrow N^{1}(X)_{\bbR}$ is a scalar multiplication.
Suppose further that $f$ has no non-trivial totally invariant closed subset (i.e. $f^{-1}(Z)=Z$ implies $Z=X$ or $Z=\emptyset$).
Then the pseudo-effective cone of $X$ is equal to the nef cone of $X$: $ \Eff(X)=\Nef(X)$.
\end{prop}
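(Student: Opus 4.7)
The plan is to combine \cref{f-invariance} with the total-invariance hypothesis on $f$ to force every small ample perturbation of a pseudo-effective class to be ample, and then pass to the limit.

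\emph{Step 1.} First I would upgrade the hypothesis to the seemingly stronger statement: any closed subset $Z\subsetneq X$ with $f(Z)\subseteq Z$ is empty. Because $f(Z)\subseteq Z$ gives $Z\subseteq f^{-1}(Z)$, the chain $Z\subseteq f^{-1}(Z)\subseteq f^{-2}(Z)\subseteq\cdots$ stabilizes by noetherianity to some $Z^{*}$ with $f^{-1}(Z^{*})=Z^{*}$. Since $f$ is surjective, $f^{-N}(Z)=X$ would force $X=f^{N}(X)\subseteq Z$ and hence $Z=X$, so $Z^{*}\neq X$; and clearly $Z^{*}\supseteq Z$. A non-empty such $Z$ would therefore produce a non-trivial totally invariant closed subset, contradicting the hypothesis on $f$.

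\emph{Step 2.} Let $d>0$ be the scalar with $f^{*}\equiv d\cdot\id$ on $N^{1}(X)_{\bbR}$, let $D$ be a pseudo-effective $\bbR$-Cartier divisor, and fix any ample divisor $A$. For each $\epsilon>0$ set $D_{\epsilon}:=D+\epsilon A$; this is big and satisfies $f^{*}D_{\epsilon}\equiv dD_{\epsilon}$. By \cref{f-invariance} we have $f(\mathbf{B}_{+}(D_{\epsilon}))\subseteq \mathbf{B}_{+}(D_{\epsilon})$, so Step~1 forces $\mathbf{B}_{+}(D_{\epsilon})\in\{\emptyset,X\}$. But the augmented base locus of a big divisor is a proper subset of $X$: by Kodaira's lemma we may write $D_{\epsilon}\equiv A'+E$ with $A'$ an ample $\bbR$-Cartier divisor and $E$ an effective $\bbR$-Cartier divisor, and then $\mathbf{B}_{+}(D_{\epsilon})\subseteq\Supp(E)\subsetneq X$. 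Hence $\mathbf{B}_{+}(D_{\epsilon})=\emptyset$ and $D_{\epsilon}$ is ample.

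\emph{Step 3.} Letting $\epsilon\to 0$, $D$ lies in the closure of the ample cone, so $D\in\Nef(X)$. This establishes $\Eff(X)\subseteq\Nef(X)$; the reverse containment is automatic on any projective variety, since ample divisors are big and hence nef classes are pseudo-effective.

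The only non-routine point is Step 1: upgrading the forward invariance $f(\mathbf{B}_{+}(D_{\epsilon}))\subseteq \mathbf{B}_{+}(D_{\epsilon})$ supplied by \cref{f-invariance} into a \emph{total} $f$-invariance so that the hypothesis on $f$ can be applied. Once this bridge is in place, the rest is a direct formal manipulation of the tools already developed earlier in the section.
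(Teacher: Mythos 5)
Your Step 1 contains a genuine gap, and in fact the lemma you state there is false. The chain $Z\subseteq f^{-1}(Z)\subseteq f^{-2}(Z)\subseteq\cdots$ is an \emph{increasing} chain of closed subsets, and noetherianity of the Zariski topology gives the \emph{descending} chain condition for closed sets, not the ascending one. Concretely, for the squaring map $f(z)=z^{2}$ on $\bbP^{1}$ and $Z=\{1\}$, the sets $f^{-n}(Z)$ (the $2^{n}$-th roots of unity) form a strictly increasing chain of closed sets that never stabilizes. Worse, the conclusion of Step~1 is simply untrue: if $p$ is any fixed (or periodic) point of $f$, then $Z=\{p\}$ is a non-empty proper closed subset with $f(Z)\subseteq Z$. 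One can choose $f\colon\bbP^{1}\to\bbP^{1}$ of degree $\ge 2$ with no totally invariant finite subset (so the hypothesis of the proposition holds) yet with fixed points; such an $f$ kills Step~1. The real point, which your argument misses, is that forward invariance $f(Z)\subseteq Z$ is genuinely weaker than total invariance $f^{-1}(Z)=Z$, and one cannot in general promote the former to the latter.

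The paper avoids this by invoking the \emph{second} clause of \cref{f-invariance}: when $D$ is nef, the augmented base locus is the null locus $\mathbb{E}(D)$, which is compatible with pull-back along the finite surjective map $f$, and this gives the honest equality $f^{-1}(\mathbf{B}_{+}(D))=\mathbf{B}_{+}(D)$ that the hypothesis on $f$ can act on. This yields that every nef and big divisor is ample. To finish, one then needs a short convexity argument to get from ``nef and big implies ample'' to $\Eff(X)=\Nef(X)$: given a big $D$ and ample $A$, let $t_{0}=\sup\{t\in[0,1] : (1-t)A+tD\ \text{is nef}\}$; by closedness of the nef cone, $(1-t_{0})A+t_{0}D$ is nef and big, hence ample, hence interior, which forces $t_{0}=1$, so $D$ is nef. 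Taking closures gives $\Eff(X)\subseteq\Nef(X)$, and the reverse inclusion is automatic. Your Steps~2 and~3 can be salvaged if you replace Step~1 by this use of the nef case of \cref{f-invariance} applied to $D_{\epsilon}$ only once it is known to be nef, i.e.\ you must insert the convexity argument rather than apply the lemma directly to an arbitrary big perturbation.
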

\begin{proof}
By \cref{f-invariance} and the assumption, for every nef and big $\bbR$-Cartier divisor $D$ on $X$, we have $\mathbf{B}_+(D)=\emptyset$.
That is, every nef and big $\bbR$-Cartier divisor is ample.
This implies all big $\bbR$-Cartier divisor is nef, and therefore ample.
\end{proof}

\begin{cor}
Let $X$ be a smooth rationally connected variety and $f \colon X \longrightarrow X$ be a polarized endomorphism
(i.e. $f^{*}H\equiv dH$ for some ample $\bbR$-divisor $H$ and $d\in \bbR_{>1}$).
Suppose $f$ has no non-trivial totally invariant closed subset.
Then $X$ is a Fano variety with Picard number at most $\dim X$.
\end{cor}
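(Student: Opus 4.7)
The plan is to deduce both parts of the corollary from the preceding \cref{psef=nef}. First I would upgrade the polarization $f^{\ast}H\equiv dH$ to the stronger statement that $f^{\ast}=d\cdot\id$ on all of $N^1(X)_{\mathbb{R}}$, after replacing $f$ by a suitable iterate. Decomposing $N^1(X)_{\mathbb{R}}=\mathbb{R}[H]\oplus H^{\perp}$ where $H^{\perp}$ is the hyperplane orthogonal to $H^{n-1}$, the Hodge index theorem makes the form $D\mapsto -D^2\cdot H^{n-2}$ positive definite on $H^{\perp}$, and a short projection-formula calculation shows that $f^{\ast}$ scales this form by $d^2$. Combined with integrality of $f^{\ast}$ on the N\'eron--Severi lattice and Kronecker's theorem, this forces the eigenvalues of $f^{\ast}$ on $H^{\perp}$ to be $d$ times roots of unity, so a further iterate of $f$ makes all eigenvalues equal to $d$. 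The hypothesis of no non-trivial totally invariant closed subset is preserved under iteration: if $f^{-n}(Z)=Z$ with $Z\subsetneq X$ closed, then $W=\bigcup_{i=0}^{n-1}f^{-i}(Z)$ is totally $f$-invariant and a finite union of proper closed subsets of the irreducible $X$, hence $W\neq X$, so $W=\emptyset$ and $Z=\emptyset$.

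Having arranged $f^{\ast}=d\cdot\id$, I would apply \cref{psef=nef} to obtain $\Eff(X)=\Nef(X)$, which (on passing to interiors) makes the big cone coincide with the ample cone. By Riemann--Hurwitz, $K_X\equiv f^{\ast}K_X+R$ with $R\geq 0$ the ramification divisor, so $(d-1)(-K_X)\equiv R$; thus $-K_X$ is pseudo-effective and hence nef. Moreover $R\neq 0$, for otherwise $f$ is \'etale, and since a smooth rationally connected variety is simply connected, $f$ would have to be an isomorphism, contradicting $d>1$. To finish the Fano part I would show $-K_X$ is big. Suppose it is not; the nef reduction of $-K_X$ produces an $f$-equivariant fibration $\pi\colon X\to Y$ with $\dim Y<\dim X$ and $-K_X\equiv\pi^{\ast}L$ for some big $\mathbb{R}$-divisor $L$ on $Y$, together with a descended polarized endomorphism $g\colon Y\to Y$ satisfying $g^{\ast}L\equiv dL$. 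The no-non-trivial-totally-invariant property transfers from $X$ to $Y$ (any such subset of $Y$ pulls back to one on $X$), and an analysis of $(Y,g)$ on the lower-dimensional rationally connected $Y$---either by induction on $\dim X$ or by tracking the fiber of $\pi$ above a suitable periodic point of $g$ and pulling back the associated preimage system---should yield a contradiction, forcing $-K_X$ to be big and hence ample.

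For the Picard-number bound, the Boucksom--Demailly--Paun--Peternell duality turns $\Eff(X)=\Nef(X)$ into $\NE(X)=\overline{\operatorname{Mov}}(X)$; every extremal ray of the Mori cone is then movable, so every elementary Mori contraction of $X$ is of fiber type (divisorial and small contractions contract non-movable curves). A smooth Fano variety all of whose elementary Mori contractions are of fiber type satisfies $\rho(X)\leq\dim X$, by the classification of Fano manifolds without divisorial contractions (cf. results of Casagrande, Wi\'sniewski, and Occhetta). I expect the hardest step to be the bigness of $-K_X$: making the nef-reduction scenario yield a clean contradiction requires careful exploitation of the polarized structure of $g$ on $Y$ together with the absence of non-trivial $f$-invariant subsets, and may ultimately require an induction on $\dim X$.
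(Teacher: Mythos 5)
Your proof shares the paper's opening backbone---making $f^{\ast}$ a scalar on $N^{1}(X)_{\mathbb{R}}$ and applying \cref{psef=nef} to conclude $\Eff(X)=\Nef(X)$---but then diverges, and the divergence contains a real gap.

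For scalarity the paper simply cites \cite{MZ18}; your sketch via the Hodge index form on $H^{\perp}$, integrality of $f^{\ast}$ on N\'eron--Severi, and Kronecker is essentially the argument underlying that reference, so this part is fine (if unnecessarily reproved). The genuine gap is in the Fano step. You derive $(d-1)(-K_X)\equiv R\geq 0$, deduce $-K_X$ is nef from $\Eff(X)=\Nef(X)$, and show $R\neq 0$ via simple connectedness; but nef plus nonzero is not big, and you openly acknowledge that bigness of $-K_X$ is left as an unresolved ``nef reduction plus induction'' scenario. The nef reduction map is in general only almost holomorphic, the descended endomorphism and its invariant-subset property are not set up, and no base case or inductive closure is supplied---so this is a hole, not a sketch. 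The paper closes it immediately by citing \cite[Corollary 1.4]{Y20}, which gives that $X$ is of Fano type, i.e.\ $-K_X-\Delta$ is ample for some effective $\mathbb{Q}$-divisor $\Delta$; since $\Delta$ is pseudo-effective hence nef under $\Eff=\Nef$, one gets $-K_X=(-K_X-\Delta)+\Delta$ ample directly, with no Riemann--Hurwitz or induction needed.

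For the Picard bound you take a different route (BDPP duality giving $\NE(X)=\overline{\operatorname{Mov}}_1(X)$, hence all elementary contractions are of fiber type, then a classification theorem for Fano manifolds without divisorial or small contractions). The paper instead runs an $f$-equivariant MMP as in \cite{MZ18}: each $K_X$-negative elementary contraction must be of fiber type because $f$ has no non-trivial totally invariant closed subset (the exceptional locus of a birational contraction would be one, after an iterate), the descended endomorphism on the base is again polarized with no non-trivial totally invariant subset, and iterating terminates at a point. Each step drops $\rho$ by one and $\dim$ by at least one, giving $\rho(X)\leq\dim X$ without invoking any classification of Fano manifolds. Your route is plausible but outsources the bound to a heavier external result; the paper's is shorter and self-contained given the \cite{MZ18} machinery. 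You should patch the bigness step (most cleanly by citing \cite{Y20} as the paper does) before this proof can be considered complete.
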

\begin{proof}
By \cite{MZ18}, if we replace $f$ with its iterate, $f^{*}$ acts as a scalar multiplication on $N^{1}(X)_{\bbR}$.
By \cref{psef=nef}, we have $ \Eff(X)=\Nef(X)$.
By \cite[Corollary 1.4]{Y20}, $X$ is of Fano type, i.e. $-K_{X}- \Delta$ is ample for some effective $\bbQ$-divisor $ \Delta$.
Thus, we get $-K_{X}$ is ample.

Every $K_{X}$-negative contraction of $X$ is of fiber type because $f$ has no non-trivial totally invariant closed subset.
Moreover $f$ induces a polarized endomorphism on the base of the contraction which has no non-trivial totally invariant closed subset.
We can repeat this process and final out put is a point (cf. \cite{MZ18}).
This implies that the Picard number of $X$ is less than or equal to $\dim X$.
\end{proof}

\section*{Acknowledgments}
The first author was partially supported by the China Scholar Council `High-level university graduate program'.
The second author is supported by JSPS Overseas Research Fellowship.

\end{document}